\numberwithin{equation}{section}
\numberwithin{figure}{section}
\theoremstyle{plain}
\newtheorem{thm}{\protect\theoremname}[section]
  \theoremstyle{definition}
  \theoremstyle{plain}
  \newtheorem{prop}[thm]{\protect\propositionname}
  \theoremstyle{plain}
  \newtheorem{cor}[thm]{\protect\corollaryname}
  \theoremstyle{plain}
  \newtheorem{lem}[thm]{\protect\lemmaname}
   \theoremstyle{plain}
  \newtheorem{hypo}[thm]{\protect\hypothesisname}
     \theoremstyle{plain}
  \newtheorem{rem}[thm]{Remark}
\def\ra{{\rightarrow}}
  \providecommand{\corollaryname}{Corollary}
  \providecommand{\definitionname}{Definition}
  \providecommand{\lemmaname}{Lemma}
  \providecommand{\propositionname}{Proposition}
\providecommand{\theoremname}{Theorem}
\providecommand{\hypothesisname}{Hypothesis}
\def\eps{{\epsilon}}
\newcommand{\R}{\mathbb{R}}
\newcommand{\YY}{{\mbox{{\bf Y}}}}
\newcommand{\yy}{{\mbox{{\bf y}}}}
\newcommand{\zz}{{\mbox{{\bf z}}}}
\newcommand{\xxi}{{\mbox{\boldmath$\xi$}}}
\newcommand{\eeta}{{\mbox{\boldmath$\eta$}}}
\begin{document}

\title{Transport maps for $\beta$-matrix models and Universality}

\author{F. Bekerman$^\ast$, A. Figalli$^\dagger$, A. Guionnet$^{\ddagger}$ }

\thanks{$^\ast$ Department of Mathematics, Massachusetts Institute of Technology, 77 Massachusetts Ave, Cambridge, MA 02139-4307 USA. email:
\texttt{bekerman@mit.edu}.\\
$^\dagger$ The University of Texas at Austin,
Mathematics Dept. RLM 8.100,
2515 Speedway Stop C1200,
Austin, Texas 78712-1202, USA. email: \texttt{figalli@math.utexas.edu}.\\
$^{\ddagger}$ Department of Mathematics, Massachusetts Institute of Technology, 77 Massachusetts Ave, Cambridge, MA 02139-4307 USA. email:
\texttt{guionnet@math.mit.edu}.}

\begin{abstract}
We construct approximate transport maps  for non-critical $\beta$-matrix models, 
that is, maps so that the push forward of a non-critical $\beta$-matrix model with a given potential
is a non-critical $\beta$-matrix model with another potential, up to a small error in the total variation distance. One of the main features of our construction is that these maps enjoy regularity estimates
which are uniform in the dimension. In addition, we find a very useful asymptotic expansion for such maps which allow us to deduce that local statistics have the same asymptotic 
behavior for both models. 
\end{abstract}
\maketitle

\section{Introduction.}
Given a potential $V:\R\to \R$ and $\beta>0$,
we consider  the $\beta$-matrix model 
\begin{equation}\label{betamodel}\mathbb P^N_V(d\lambda_1,\ldots,d\lambda_N):=\frac{1}{Z^N_V}\prod_{1\le i<j\le N}|\lambda_i-\lambda_j|^\beta e^{-N\sum_{i=1}^N V(\lambda_i)} \,d\lambda_1\dots\,d\lambda_N,\end{equation}
where $Z_V^N:=\int \prod_{1\le i<j\le N}|\lambda_i-\lambda_j|^\beta e^{-N\sum_{i=1}^N V(\lambda_i)} \,d\lambda_1\dots\,d\lambda_N$.
  It is well known (see e.g. \cite{Vi}) that  for any $V,W:\R\to\R$ such that $Z_V^N,Z_{V+W}^N<\infty$ there exists  maps $T^N:\mathbb R^N\ra\mathbb R^N$  that transport  $\mathbb P^N_V$ into
$\mathbb P^N_{V+W}$,  that is, for any bounded measurable function $f:\mathbb R^N\ra\mathbb R$ we have
$$\int f\circ T^N (\lambda_1,\ldots,\lambda_N) \,\mathbb P^N_V(d\lambda_1,\ldots,d\lambda_N)=\int f(\lambda_1,\ldots,\lambda_N) \,  \mathbb P^N_{V+W}(d\lambda_1,\ldots,d\lambda_N)\,.$$
However, the dependency in the dimension $N$ of this transport map is in general unclear
unless one makes very strong  assumptions on the densities \cite{Caff} that unfortunately are never satisfied in our situation.
Hence, it seems extremely difficult to use these maps $T^N$ to understand
the relation between  the asymptotic  of the two models.

The main contribution of this paper is to show that
a variant of this approach is indeed possible and provides a very robust and flexible method to compare the asymptotics of local statistics.
In the more general context of several-matrices models, it was shown in \cite{GS} that the maps $T^N$ are asymptotically well 
 approximated by a function of matrices independent of $N$, but it was left open the question of studying  
 corrections to this limit. In this article we consider one-matrix models, and more precisely their generalization given by 
$\beta$-models, and we 
construct {\it approximate} transport maps with a very precise 
dependence on the dimension. This allows us to compare local fluctuations of the eigenvalues and show universality.\\

Local fluctuations were first studied in the case $\beta=2$. After the work of Mehta \cite{ME} where the sine kernel law was exhibited for the GUE,
Pastur and Shcherbina \cite{PS} proved universality for $V\in C^3$. The Riemann Hilbert approach was then developed in this context in \cite{DKMcLVZ2} for analytic potentials.
The paper \cite{LL} provides the most general result for universality in the case $\beta=2$. 

In the case $\beta=1,4$, universality was proved in  \cite{DeGibulk} in the bulk, and \cite{DeGiedge} at the edge, for monomials $V$ (see \cite{DeGi2} for a review). For general one-cut potentials, the first proof of universality was given in \cite{Shch} in the case $\beta=1$, whereas \cite{KrSh} treated the case $\beta=4$ in the one-cut case.
The multi-cut case was then considered in \cite{Sh}.

The local fluctuations of more general $\beta$-ensembles were only derived recently \cite{VV,RRV} in the Gaussian case. 
Universality in the $\beta$-ensembles was first addressed in \cite{BEY1} (in the bulk, $\beta> 0$, $V\in C^4$), then in  \cite{BEY2} (at the edge, $\beta\ge 1$, $V\in C^4$) and 
\cite{KRV} (at the edge, $\beta>0$, $V$ convex polynomial) and finally in  \cite{Shch} (in the bulk, $\beta>0$, $V$ analytic, multi-cut case included). 

The paper \cite{Shch} and the current one were completed essentially at the same time,
and the method developed in \cite{Shch} is closely related with the one followed in the present work in the sense that it uses a transport map, but still they are very different.
Indeed, \cite{Shch} uses a global transport map $T^N=T_0^{\otimes N}$, independent of the dimension,  which maps the probability $\mathbb P^N_{V}$ to an intermediate probability measure $\tilde {\mathbb P}^N_{V+W}$ 
which is absolutely continuous with respect to $\mathbb P^N_{V+W}$ but with a non-trivial density, and then shows that this density does not perturb the local fluctuations of local statistics.
On the other hand, in the present paper the map $T^N$ is constructed to map $\mathbb P^N_{V}$ to an intermediate probability measure $\tilde {\mathbb P}^N_{V+W}$ 
with density with respect to $\mathbb P^N_{V+W}$ going uniformly to one: the dependence of $T_N$ in the dimension $N$ then allows  us to retrieve the universality of the local fluctuations. 
\\

The main idea of this paper is that we can build
a smooth approximate transport map which at the first order is simply a tensor product, and then it has
first order corrections of order $N^{-1}$ (see Theorem \ref{thm:T}). This continues the long standing idea developed in  loop equations theory to study the correlation functions of
$\beta$-matrix models by clever change of variables, see e.g. \cite{johansson, KrSh, AM, BEMP}.
On the contrary to loop equations, the change of variable has to be of order one rather than infinitesimal. 

The first order term of our map is simply the monotone transport map
between the  asymptotic equilibrium measures; then, corrections to this first order are constructed so that densities match up to a priori 
smaller fluctuating terms.
As we shall see, our transport map is constructed as the flow
of a vector field obtained by approximately solving a linearized Jacobian equation
and then making a suitable ansatz on the structure of the vector field. The errors are
 controlled by deriving bounds on  covariances and correlations functions thanks to loop equations,  allowing us to obtain a self-contained
proof of universality (see Theorem \ref{thm:univ}). Although optimal transportation will never be really used,
it will provide us the correct intuition in order to solve this problem (see Sections \ref{sect:MA2} and \ref{sect:approx}).

We notice that this last step could also be performed either by using directly central limit theorems, see e.g.  \cite{Shch,borot-guionnet,Shc12},  or local limit laws
\cite{BEY1,BEY2}.  However, our approach has the advantage of being pretty robust and should generalize to many other mean field
models. In particular, it should be possible to generalize it to the several-matrices models, at least in the perturbative regime considered in 
\cite{GS}, but this would not solve yet the question of universality  as the transport map would be a non-commutative function of several matrices.
Even in the case of GUE matrices, there are not yet any results about the local statistics of the eigenvalues of such non-commutative functions, except for a few very specific cases. 
\\ 

We now describe our results in detail.
Given a potential $V:\mathbb R\to \mathbb R$, we consider the probability measure \eqref{betamodel}. We assume that $V$ goes to infinity faster than $\beta\log |x|$  (that is $V(x)/\beta\log |x|\to +\infty$ as $|x|\to +\infty$) so that in particular $Z^N_V$ is finite.

We will use
$\mu_V$ to denote the equilibrium measure, which is obtained as limit of
the spectral measure and is characterized as the unique minimizer (among probability measures) 
of \begin{equation}\label{entr}I_V(\mu):=\frac{1}{2} \int \bigl( V(x)+V(y) -\beta \log |x-y|\bigr)d\mu(x)d\mu(y)\,.\end{equation}
We assume hereafter that another smooth potential $W$ is given so that $V+W$ goes to infinity faster than $\beta\log |x|$.  We set $V_t:=V+tW$, and
we shall make the following assumption:
\begin{hypo}\label{hypo1}
We assume that $\mu_{V_0}$ and $\mu_{V_1}$ have a connected support and  are non-critical, that is, there exists a constant $\bar c>0$ such that, for $t=0,1$,
$$\frac{d\mu_{V_t}}{dx}=S_t(x)\sqrt{ (x-a_t)(b_t-x)} \qquad \text{with $S_t \geq \bar c$ a.e. on  $[a_t,b_t]$.}$$
\end{hypo}

\begin{rem}{\rm 
The assumption of a connected support could be removed here, following the lines of \cite{Shch,borot-guionnet2}. Indeed, only a generalization of Lemma \ref{central} is required, which is not difficult. However, the non-criticality assumption
cannot be removed easily, as criticality would result in singularities in the transport map.}
\end{rem}

Finally, we assume that the eigenvalues stay  in a neighborhood of the support $[a_t-\eps,b_t+\eps]$ with large enough  $\mathbb P^N_V$-probability, that is with probability greater than $1-C\,N^{-p}$ for some $p$ large enough. 
By \cite[Lemma 3.1]{borot-guionnet2},
 the latter is fulfilled as soon as:
 \begin{hypo} \label{hypo2} For $t=0,1$,
 \begin{equation}\label{noncr}
 U_{V_t}(x):=V_t(x)-\beta\int d\mu_{V_t}(y)\log |x-y|\end{equation}
 achieves its minimal value on $[a,b]^c$ at its boundary $\{a,b\}$ 
 \end{hypo}
All these assumptions are verified for instance if $V_t$ is uniformly convex for  $t=0,1$. \\

The main goal of this article is to build an approximate transport map between $\mathbb P^N_V$ and $\mathbb P^N_{V+W}$: more precisely, we construct a map $T^N:\mathbb R^N\ra\mathbb R^N$ 
such that, for any bounded measurable function $\chi$,
\begin{equation}\label{aptr}\left|\int \chi\circ T^N\, d\mathbb P^N_{V}-\int \chi\, d\mathbb P^N_{V+W}\right|\le
C\,\frac{(\log N)^3 }{{N}} \|\chi\|_\infty
\end{equation}
for some constant $C$ independent of $N$, and which has a very precise
expansion in the dimension (in the following result, $T_0:\R\to\R$ is a  smooth  transport map  of $\mu_{V}$ onto $\mu_{V+W}$, see Section \ref{sect:flow}):

\begin{thm}
\label{thm:T}
Assume that $V',W$ are of class $C^{30}$ and satisfy 
Hypotheses \ref{hypo1} and \ref{hypo2}.
Then there exists a map $T^N=(T^{N,1},\ldots,T^{N,N}):\mathbb R^N\ra\mathbb R^N$ which satisfies \eqref{aptr} and has the form
$$T^{N,i}(\hat\lambda)= T_0(\lambda_i) +\frac{1}{N} T_1^{N,i}(\hat\lambda)\quad \forall\,i=1,\ldots,N,\qquad \hat\lambda:=(\lambda_1,\ldots,\lambda_N),$$
where $T_0:\R\to \R$ and $T_1^{N,i}:\R^N\to \R$ are smooth and satisfy uniform (in $N$) regularity estimates. More precisely, $T^N$ is of class 
 $C^{23}$ and 
we have the decomposition  $T^{N,i}_1= X^{N,i}_1+\frac{1}{N} X^{N,i}_2$ where
\begin{equation}\label{boi1int}
\sup_{1\leq k \leq N}\|X_{1}^{N,k}\|_{L^4(\mathbb P_{V}^N)} \leq C\log N,\qquad \|X_{2}^{N}\|_{L^2(\mathbb P_{V}^N)} \leq CN^{1/2}(\log N)^2,
\end{equation}for some constant $C>0$ independent of $N$.
In addition, with probability greater than $1- N^{-N/C}$,
\begin{equation}\label{boi2int}\max_{1\le k,k'\le N}
|X_{1}^{N,k}(\lambda)-X_{1}^{N,k'}(\lambda)|\le C\,\log N \sqrt{N}|\lambda_k-\lambda_{k'}|.\end{equation}
\end{thm}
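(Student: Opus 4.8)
The plan is to construct $T^N$ as the time-one flow of a well-chosen vector field, following the optimal transport heuristic but working only at the level of an approximate (linearized) Jacobian equation. First I would recall that the monotone transport map $T_0$ of $\mu_V$ onto $\mu_{V+W}$ is smooth and bounded with bounded derivatives on a neighborhood of the support, thanks to Hypothesis \ref{hypo1} (non-criticality guarantees $S_t\geq\bar c$, so the square-root edge behavior of the two equilibrium densities matches and $T_0$ inherits $C^{k}$ regularity from $V',W\in C^{30}$). The tensorized map $\hat\lambda\mapsto(T_0(\lambda_1),\ldots,T_0(\lambda_N))$ already pushes $\mathbb{P}^N_V$ close to $\mathbb{P}^N_{V+W}$ at leading order; writing out the change-of-variables formula, the mismatch in the log-densities is governed by a linear operator (the "master operator" of loop equation theory) applied to the unknown correction $\phi$, plus a deterministic remainder and fluctuating terms. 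The natural ansatz is $T^{N,i}(\hat\lambda)=T_0(\lambda_i)+\frac1N\,\phi^{N,i}(\hat\lambda)$ with $\phi^{N,i}$ itself of a structured form, e.g. $\phi^{N,i}=y_0(\lambda_i)+y_1(\lambda_i)\frac1N\sum_j z(\lambda_i,\lambda_j)+\ldots$, so that the linearized equation reduces to a finite family of \emph{one- and two-variable} functional equations that can be solved using the inverse of the equilibrium-measure master operator on a suitable analytic function space. This is where the bulk of the construction lies.

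Second, having fixed the vector field $v^N:=T^N-\mathrm{id}$, I would estimate the error in \eqref{aptr} by comparing $\log$-densities along the flow: the discrepancy is an integral over $[0,1]$ of a quantity that, after the functional equation is solved, consists only of (i) a deterministic $O(N^{-1}(\log N)^3)$ term and (ii) centered fluctuating terms of the form $\sum_i f(\lambda_i)-N\int f\,d\mu$ and double sums, whose typical size is controlled by the loop equations / concentration estimates quoted from \cite{borot-guionnet2} and the cited covariance bounds. Localizing to the high-probability event $\{\lambda_i\in[a_t-\eps,b_t+\eps]\ \forall i\}$ (valid with probability $\geq 1-CN^{-p}$ by Hypothesis \ref{hypo2} and \cite[Lemma 3.1]{borot-guionnet2}) lets me treat all functions as smooth and compactly supported, so the total variation bound follows from a Gronwall argument on the relative density along the flow. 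The decomposition $T_1^{N,i}=X_1^{N,i}+\frac1N X_2^{N,i}$ comes directly from splitting $\phi^{N,i}$ into its "order-one" piece (the solution of the first linearized equation, a smooth function of $\lambda_i$ and of empirical-measure-type quantities, hence bounded by $C\log N$ in $L^4$ once we use that empirical moments fluctuate at scale $\sqrt{\log N}/\sqrt N$) and the next correction $X_2^{N,i}$, whose $L^2$ norm picks up an extra $\sqrt N(\log N)^2$ from summing $N$ weakly-correlated terms.

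Third, the Lipschitz-type bound \eqref{boi2int} is a regularity statement about $X_1^{N,k}$ as a function of the label $k$: since $X_1^{N,k}(\lambda)$ depends on $\lambda_k$ through a fixed smooth function (with $N$-uniform derivative bounds on the enlarged support) and on the other $\lambda_j$ only through symmetric empirical quantities common to all $k$, the difference $X_1^{N,k}-X_1^{N,k'}$ is, up to a deterministic factor, $\partial_x(\text{smooth})\cdot(\lambda_k-\lambda_{k'})$ plus a term that is itself proportional to $\lambda_k-\lambda_{k'}$ with a random coefficient; on the event that all eigenvalues lie in the enlarged support — which has $\mathbb{P}^N_V$-probability $\geq 1-N^{-N/C}$ by the large-deviation upper bound for the extreme eigenvalues under Hypothesis \ref{hypo2} — that random coefficient is bounded by $C\sqrt N\log N$. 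So \eqref{boi2int} reduces to a deterministic mean-value estimate once the structural form of $X_1^{N,k}$ is in hand.

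The main obstacle I anticipate is the solvability and uniform-in-$N$ regularity of the linearized Jacobian (master) equation: one must invert, on an appropriate space of analytic or $C^{k}$ functions, the operator $f\mapsto \Xi f$ where $\Xi$ is essentially the Hessian of $I_{V}$ linearized at $\mu_V$, and control how this inverse interacts with the square-root singularities at the edge $\{a_t,b_t\}$ — this is precisely where non-criticality is indispensable and where the loss of derivatives ($C^{30}\to C^{23}$) comes from. A secondary but delicate point is bookkeeping the fluctuating error terms sharply enough to land at $(\log N)^3/N$ rather than a worse power of $\log N$, which forces one to use not just concentration of the empirical measure but the finer covariance estimates for linear statistics obtained from a second application of the loop equations.
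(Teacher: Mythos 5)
Your overall route---realizing $T^N$ as the time-one flow of a vector field with a one-body term plus a $1/N$ centered two-body term, solving the resulting linearized equations by inverting the master operator $\Xi$ (Lemma \ref{central}), and controlling the residual along the flow by concentration and loop equations---is the same as the paper's. However, two of the mechanisms you invoke for the quantitative claims would not deliver the stated bounds. For the first bound in \eqref{boi1int} you assert that $\sup_k\|X_1^{N,k}\|_{L^4(\mathbb P^N_V)}\le C\log N$ follows ``once we use that empirical moments fluctuate at scale $\sqrt{\log N}/\sqrt N$.'' That concentration statement says $|\int f\,dM_N|\lesssim\sqrt{N\log N}$ for Lipschitz $f$, so the two-body source $\int \zz_t(\lambda_k,y)\,dM_N(y)$ driving $X_1^{N,k}$ is only $O(\sqrt N\log N)$ on that event; this is precisely the almost-sure bound \eqref{eq:bounds X1} and it misses the claimed $L^4$ bound by a factor $\sqrt N$. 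Reaching $\log N$ requires CLT-scale control of smooth linear statistics, i.e.\ the loop-equation moment bounds of Lemma \ref{conch} (giving $\|M_N(e^{i\xi\cdot})\|_{L^4}\le C\log N(1+|\xi|^7)$) combined with a Fourier decomposition of the kernel $\zz_t$ and a Gronwall argument on the linear system \eqref{eq:X1}; in your plan the finer covariance estimates appear only as a device to sharpen powers of $\log N$ in \eqref{aptr}, whereas they are indispensable already for \eqref{boi1int} (and for the quadratic and cubic terms of the residual, which would otherwise be polynomially larger in $N$).

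For \eqref{boi2int} you work on the event that all eigenvalues lie in the enlarged support. That event has probability at best $1-e^{-cN}$ by the extreme-eigenvalue large deviations (Hypothesis \ref{hypo2} is used for a bound of the form $1-CN^{-p}$), which is weaker than the claimed $1-N^{-N/C}$; more importantly, on that event alone the random coefficient $\int\partial_1\zz_t(\lambda_k,y)\,dM_N(y)$ is only $O(N)$, since $M_N$ has total mass $2N$, not $O(\sqrt N\log N)$. Both the $C\sqrt N\log N$ size of this coefficient and the $1-N^{-N/C}$ probability come from the Wasserstein-concentration event in \eqref{conc}, $D(L_N,\mu_{V_t})\le\tau\sqrt{\log N/N}$, whose complement has probability $e^{-c\tau^2N\log N}=N^{-c\tau^2 N}$. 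Finally, $X_1^{N,k}$ is not simply ``a fixed smooth function of $\lambda_k$ plus symmetric statistics common to all $k$'': in the flow construction it solves \eqref{eq:X1}, whose coordinates are coupled through the term $\frac1N\sum_j\partial_2\zz_t\bigl(X_{0,t}(\lambda_k),X_{0,t}(\lambda_j)\bigr)X_{1,t}^{N,j}$, so the Lipschitz-in-$k$ estimate \eqref{boi2int} requires a Gronwall argument on the difference $X_{1,t}^{N,k}-X_{1,t}^{N,k'}$ rather than a bare mean-value inspection. With these two inputs corrected, the rest of your outline matches the paper's argument.
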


As we shall see in Section \ref{sect:univ}, this result implies universality as follows
(compare with \cite[Theorem 2.4]{BEY2}):

\begin{thm} 
\label{thm:univ}
Assume
$V',W\in C^{30}$, and let $T_0$ be as in Theorem \ref{thm:T} above.
Denote $\tilde P_V^N$ the distribution of
the increasingly ordered  eigenvalues $\lambda_i$ under $\mathbb P_V^N$. There exists a constant $\hat C>0$, independent of $N$, such that the following two facts hold true:
\begin{enumerate}
\item Let $M\in (0,\infty)$ and $m\in \mathbb N$. For any Lipschitz function $f:\R^m\to \R$ supported inside  $[-M,M]^m$,
\begin{multline*}
\bigg|\int f\bigl(N(\lambda_{i+1}-\lambda_i),\ldots,N(\lambda_{i+m}-\lambda_i)\bigr)\, d\tilde P_{V+W}^N\\
\qquad -\int f\bigl(T_{0}'(\lambda_i)N(\lambda_{i+1}-\lambda_i),\ldots,T_{0}'(\lambda_i)N(\lambda_{i+m}-\lambda_i)\bigr) d\tilde P_{V}^N\bigg|\\
 \le \hat C \,\frac{(\log N)^3 }{N} 
\|f\|_\infty +  \hat C\,\biggl(\sqrt{m}\,\frac{(\log N)^2}{N^{1/2}}+M\,\frac{\log N}{N^{1/2}} +\frac{M^2}{N}\biggr)
\|\nabla f\|_{\infty}.
\end{multline*}
\item 
Let $a_V$ (resp. $a_{V+W}$) denote the smallest point in the
support of $\mu_V$ (resp. $\mu_{V+W}$), so that ${\rm supp}(\mu_V)\subset [a_V,\infty)$
(resp.  ${\rm supp}(\mu_{V+W})\subset [a_{V+W},\infty)$).
Let $M\in  (0,\infty)$. Then, for any Lipschitz function $f:\R^m\to \R$ supported inside  $[-M,M]^m$,
\begin{multline*}
\bigg|
\int f\bigl(N^{2/3}(\lambda_1-a_{V+W}), \ldots,N^{2/3}(\lambda_m-a_{V+W})\bigr) \,d\tilde P_{V+W}^N\\
-\int f\Bigl(N^{2/3}T_{0}'(a_V)\bigl(\lambda_1-a_V\bigr), \ldots,N^{2/3}T_{0}'(a_V)\bigl(\lambda_m-a_V\bigr)\Bigr)\, d\tilde P_{V}^N\bigg|\\
\le  \hat C \,\frac{(\log N)^3}{{N}} 
\|f\|_\infty+ \hat C\biggl(\sqrt{m}\,\frac{(\log N)^2}{N^{5/6}}+M\,\frac{\log N}{N^{5/6}} +\frac{M^2}{N^{4/3}} + \frac{\log N}{N^{1/3}}\biggr)
\|\nabla f\|_{\infty} .
\end{multline*}
The same bound holds around the largest point in the support of $\mu_V$.
\end{enumerate}
\end{thm}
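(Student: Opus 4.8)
The plan is to deduce Theorem \ref{thm:univ} from the approximate transport estimate in Theorem \ref{thm:T} by a careful bookkeeping argument. First I would reduce the statement about the ordered eigenvalue distributions $\tilde P^N_V$, $\tilde P^N_{V+W}$ to the symmetrized measures $\mathbb P^N_V$, $\mathbb P^N_{V+W}$: since $T_0:\R\to\R$ is a monotone transport map between the equilibrium measures (by construction in Section \ref{sect:flow} it is increasing) and the correction $\frac1N T_1^{N,i}$ is small, the map $T^N$ essentially preserves the order of coordinates on the event where the eigenvalues are well separated and lie near $[a_t-\eps,b_t+\eps]$; outside this event we pay only $C N^{-p}\|f\|_\infty$. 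So I want to apply \eqref{aptr} to the test function $\chi(\hat\lambda)=f\bigl(N(\lambda_{(i+1)}-\lambda_{(i)}),\ldots,N(\lambda_{(i+m)}-\lambda_{(i)})\bigr)$ built out of ordered coordinates; the error term $C(\log N)^3 N^{-1}\|\chi\|_\infty=C(\log N)^3 N^{-1}\|f\|_\infty$ produces the first term on the right-hand side of both assertions.

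Next, the heart of the matter is to compare $\chi\circ T^N$ with the target test function. Write $T^{N,i}(\hat\lambda)=T_0(\lambda_i)+\frac1N T_1^{N,i}(\hat\lambda)$. Then
\begin{align*}
N\bigl(T^{N,i+k}(\hat\lambda)-T^{N,i}(\hat\lambda)\bigr)
&= N\bigl(T_0(\lambda_{i+k})-T_0(\lambda_i)\bigr)+\bigl(T_1^{N,i+k}(\hat\lambda)-T_1^{N,i}(\hat\lambda)\bigr).
\end{align*}
A first-order Taylor expansion of $T_0$ around $\lambda_i$ gives $N(T_0(\lambda_{i+k})-T_0(\lambda_i))=T_0'(\lambda_i)N(\lambda_{i+k}-\lambda_i)+O\bigl(N(\lambda_{i+k}-\lambda_i)^2\bigr)$, and on the relevant event $N(\lambda_{i+k}-\lambda_i)\le M$ forces $|\lambda_{i+k}-\lambda_i|\le M/N$, so this remainder is $O(M^2/N)$, contributing the $M^2/N$ term against $\|\nabla f\|_\infty$. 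For the correction term I would use the decomposition $T_1^{N,i}=X_1^{N,i}+\frac1N X_2^{N,i}$ from Theorem \ref{thm:T}: the increment $X_1^{N,i+k}-X_1^{N,i}$ is controlled by \eqref{boi2int}, namely $|X_1^{N,i+k}-X_1^{N,i}|\le C\log N\,\sqrt N\,|\lambda_{i+k}-\lambda_i|\le C M\log N\, N^{-1/2}$ on the good event, which gives the $M\log N\,N^{-1/2}$ term; and the $\frac1N X_2^{N,i}$ piece is handled by the $L^2$ bound $\|X_2^N\|_{L^2}\le C N^{1/2}(\log N)^2$ together with a union bound over the $N$ coordinates, losing a further $\sqrt m$ because $m$ differences are involved, yielding the $\sqrt m\,(\log N)^2 N^{-1/2}$ term. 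Summing, $|\chi\circ T^N-f(T_0'(\lambda_i)N(\lambda_{i+1}-\lambda_i),\dots)|$ is bounded on the good event by $\|\nabla f\|_\infty$ times the stated combination, and integrating against $\mathbb P^N_V$ while discarding the complement of the good event (of probability $\le N^{-p}$, absorbed into the $\|f\|_\infty$ term) completes part (1).

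For part (2), the edge case, the mechanism is the same but one rescales by $N^{2/3}$ rather than $N$, and one localizes near the edge $a_{V+W}$ (resp.\ $a_V$). The Taylor expansion of $T_0$ is now performed at the edge point $a_V$, using $T_0(a_V)=a_{V+W}$ (the monotone map sends edge to edge) so that $N^{2/3}(T_0(\lambda_j)-a_{V+W})=T_0'(a_V)N^{2/3}(\lambda_j-a_V)+O\bigl(N^{2/3}(\lambda_j-a_V)^2\bigr)=T_0'(a_V)N^{2/3}(\lambda_j-a_V)+O(M^2 N^{-4/3})$ on the event $N^{2/3}(\lambda_j-a_V)\le M$. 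The correction terms are estimated exactly as before but with the extra factor $N^{2/3}/N=N^{-1/3}$ coming from the rescaling applied to $\frac1N T_1^{N,j}$: this is why part (2) has $N^{-5/6}$ in place of $N^{-1/2}$ and carries the additional $\log N\, N^{-1/3}$ term (which comes from bounding $N^{2/3}\cdot\frac1N X_1^{N,j}$ directly via the $L^4$ bound $\|X_1^{N,k}\|_{L^4}\le C\log N$ rather than via the increment bound, since at the edge there is no reference coordinate to subtract). The argument near the largest point is symmetric. The main obstacle, and the step needing the most care, is the passage to ordered eigenvalues and the verification that $T^N$ is order-preserving on a high-probability event: one must check, using \eqref{boi2int} together with rigidity/separation estimates for the eigenvalues (which follow from the loop-equation bounds established in the body of the paper, e.g.\ via Lemma \ref{central}), that the $N^{-1}T_1^{N,i}$ corrections cannot reorder neighboring eigenvalues, so that composing $\chi$ (defined via ordered coordinates) with $T^N$ is legitimate and matches the symmetrized picture. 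Everything else is Taylor expansion plus Cauchy–Schwarz and union bounds.
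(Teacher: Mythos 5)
Your overall strategy is the same as the paper's: transport the gap/edge observables through the expansion $T^{N,i}=T_0(\lambda_i)+\frac1N X_1^{N,i}+\frac1{N^2}X_2^{N,i}$, Taylor-expand $T_0$ at $\lambda_i$ (resp.\ at $a_V$, using that the monotone map sends edge to edge, $T_0(a_V)=a_{V+W}$), control the increments of $X_1^N$ by \eqref{boi2int}, the $X_2^N$ piece by its $L^2$ bound, and pay a $\sqrt m$ for observables of $m$ ordered coordinates; your bookkeeping of the error terms, both in the bulk ($M^2/N$, $M\log N\,N^{-1/2}$, $\sqrt m(\log N)^2N^{-1/2}$) and at the edge ($M^2N^{-4/3}$, the $N^{-5/6}$ factors, and $\log N\,N^{-1/3}$ from the $L^4$ bound on $X_1^{N,i}$), reproduces the proof.

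The genuine gap is in the ordering step, which you yourself single out as the delicate point but resolve incorrectly. You compose the ordered-coordinate observable with the full map $T^N$ and justify this by asserting that $T^N$ preserves the order on a high-probability event, invoking ``rigidity/separation estimates'' that you claim follow from the loop-equation bounds or Lemma \ref{central}. No minimal-gap (level repulsion) estimate is proved anywhere in the paper, and none follows from Lemma \ref{central} or from \eqref{conc}, which control linear statistics only; moreover, for $T^N$ itself order preservation is genuinely out of reach with the available bounds, since the only control on the $\frac1{N^2}X_2^N$ correction is an $L^2$ bound with no pointwise or increment information, so it could a priori reorder eigenvalues whose gap is unusually small. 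The paper's proof avoids any such input by reorganizing the steps: first replace $X_1^N$ (the flow) by $\hat X_1^N=X_{0,1}^N+\frac1N X_{1,1}^N$ at the level of integrals, paying $\|\nabla g\|_\infty N^{-2}\|X_{2,1}^N\|_{L^2}\le C(\log N)^2N^{-3/2}\|\nabla g\|_\infty$ (this is where the $X_2$ term is discarded once and for all), and only then use order preservation --- of $\hat X_1^N$, not of $T^N$. For $\hat X_1^N$ no separation is needed: the two-sided bounds $e^{-L}\le X_{0,1}'\le e^{L}$ together with \eqref{boi2int}, whose right-hand side is proportional to $|\lambda_k-\lambda_{k'}|$, show that $\hat X_1^{N,j}-\hat X_1^{N,i}$ has the same sign as $\lambda_j-\lambda_i$ with probability $1-N^{-N/C}$ no matter how small the gap is. Combined with the fact that the sorting map $R$ is $1$-Lipschitz for the sup norm (which is the correct source of the $\sqrt m$ factor, via $\|\nabla(g\circ R)\|_\infty\le\sqrt m\,\|\nabla g\|_\infty$, rather than a union bound over coordinates), this yields the stated estimates. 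So your argument is reparable, but as written the order-preservation step rests on estimates that are unavailable, and the repair is precisely this reordering of the steps.
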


\begin{rem} {\em The condition that $V',W\in C^{30}$ in the theorem above is clearly non-optimal
(compare with \cite{BEY1}). 
For instance, by using Stieltjes transform instead of Fourier transform in some of our estimates,
we could reduce the regularity assumptions on $V',W$ to $C^{21}$ by a slightly more cumbersome proof.
In addition, by using \cite[Theorem 2.4]{BEY2}
we could also weaken our regularity assumptions in Theorem \ref{thm:T},
as we could use that result to estimate the error terms in Section \ref{sect:error}.
However, the main point of this hypothesis for us is to stress that we do not need to have 
analytic potentials, as often required in matrix models theory. Moreover, under this assumption we  can provide self-contained and short proofs of Theorems \ref{thm:T} and \ref{thm:univ}. 
}
\end{rem} 
Note that Theorem \ref{thm:T} is well suited to prove universality of the spacings distribution  in the bulk as stated in Theorem \ref{thm:univ}, but
it is not clear how to directly deduce the universality of the rescaled density, see e.g \cite[Theorem 2.5(i)]{BEY1}. Indeed, this corresponds 
to choosing test functions whose uniform norm blows up like some power of the dimension, so to apply Theorem \ref{thm:T} we should have an a priori  control 
on the numbers of eigenvalues inside sets of size of order $N^{-1}$
under both $\mathbb P^N_{V}$ and $\mathbb P^N_{V+W}$. Notice however that \cite[Theorem 2.5(ii)]{BEY1} requires $\beta \geq 1$,
while our results hold for any $\beta>0$. \cite{KRV} holds for $\beta>0$ but convex polynomial.
In particular, the edge universality proved in Theorem \ref{thm:univ}(2) is  new for $\beta \in (0,1)$ and $V \in C^{30}$ which is not
a polynomial.
In addition our strategy is very robust and flexible.
For instance, although we shall not pursue this direction here, it looks likely to us that one could use it prove the universality of the  asymptotics of the law of $\{N(\lambda_i-x) \}_{1\le i\le m}$ under $\tilde P_V^N$  for
given $i$ and $x$, as in \cite{BY14,BY14B}. Indeed, $N(\lambda_i-x)$ can be seen via the approximate transport map to be given by 
the same kind of difference under the source measure (e.g. the Gaussian one), up to a term coming from the transport map $T^{N,i}_1$. Proving that $T^{N,i}_1$ does not depend on $i$ locally would suffice to prove universality, as the sine-kernel law is translation invariant. 
\\

The paper is structured as follows:
In Section \ref{sect:MA} we describe the general strategy to construct our transport map as the flow
of vector fields obtained by approximately solving a linearization of the Monge-Amp\`ere equation (see \eqref{eq:laplace}).
As we shall explain there, this idea comes from optimal transport theory.
In Section \ref{sect:laplace} we make an ansatz on the structure of an approximate solution to \eqref{eq:laplace} and we show that our ansatz actually provides a smooth solution which enjoys
very nice regularity estimates that are uniform as $N \to \infty$.
In Section \ref{sect:flow} we reconstruct the approximate transport map from $\mathbb P^N_V$ to $\mathbb P^N_{V+W}$ via a flow argument.
The estimates proved in this section will be crucial in Section \ref{sect:univ} to show universality.\\

\textit{Acknowledgments:} 
AF was partially supported by NSF Grant DMS-1262411. AG was partially supported by the Simons Foundation and by NSF Grant DMS-1307704. 
Both authors are thankful to the anonymous referees for several useful comments that helped to improve the presentation of the paper.

\section{Approximate Monge-Amp\`ere equation}
\label{sect:MA}
\subsection{Propagating the hypotheses}
The central idea of the paper  is to build transport maps as flows, and in fact to build transport maps between $\mathbb P_V^N$
and $\mathbb P_{V_t}^N$ where $t\mapsto V_t$ is a smooth function so that $V_0=V$, $V_1=V+W$. 
In order to have a good interpolation between $V$ and $V+W$, it will be convenient to assume that
the support of the two equilibrium measures $\mu_V$ and $\mu_{V+W}$ (see \eqref{entr}) are the same. 
This can always be done up to an affine transformation. Indeed, if $L:\R\to \R$ is the affine transformation
which maps $[a_1,b_1]$ (the support of $\mu_{V_1}$) onto $[a_0,b_0]$ (the support of $\mu_{V_0}$), we first construct a transport map from
$ \mathbb P_{V}^N$ to $L^{\otimes N}_\sharp \mathbb P_{V+W}^N=\mathbb P_N^{V+\tilde W}$
where \begin{equation}\label{pol}
\tilde W=V\circ L^{-1} +W\circ L^{-1} -V,\end{equation}
and then we simply compose our transport map with $(L^{-1})^{\otimes N}$ to get the desired map from $ \mathbb P_{V}^N$ to $\mathbb P_{V+W}^N$.
Hence, without loss of generality
we will hereafter assume that $\mu_V$ and $\mu_{V+W}$ have the same support.  We then consider the interpolation $\mu_{V_t}$ with $V_t=V+tW$,
$t\in [0,1]$. 
We have:

\begin{lem}\label{propa} If  Hypotheses \ref{hypo1} and \ref{hypo2} are fulfilled for $t=0,1$,  then Hypothesis \ref{hypo1}
is also fulfilled for all $t\in [0,1]$. Moreover, we may assume without loss of generality that $V$ goes to infinity as fast as we want 
up to modify $\mathbb P_{V}^N$ and $\mathbb P_{V+W}^N$ by a negligible error (in total variation).
\end{lem}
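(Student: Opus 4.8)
The plan is to prove the two assertions separately. For the first — propagation of Hypothesis \ref{hypo1} along the interpolation $V_t = V + tW$ — the key tool is the explicit description of the equilibrium measure and the characterization of its density via the variational (Euler–Lagrange) conditions. Recall that $\mu_{V_t}$ is characterized by the existence of a constant $c_t$ (the "effective potential" level) such that $V_t(x) - \beta \int \log|x-y|\,d\mu_{V_t}(y) = c_t$ on the support and $\geq c_t$ outside. Since $V$ and $W$ are smooth and $\mu_{V_0}$, $\mu_{V_1}$ both have connected support $[a_0,b_0]=[a_1,b_1]=:[a,b]$ (after the affine reduction explained just above the lemma), I would first argue that $\mu_{V_t}$ has connected support $[a_t,b_t]$ for every $t\in[0,1]$, with $a_t,b_t$ depending smoothly on $t$; this follows from the implicit function theorem applied to the system of equations fixing the endpoints (the two conditions that the density vanishes like a square root at each edge), using non-criticality at $t=0,1$ to get invertibility at the endpoints and a connectedness/continuity argument in between (the set of $t$ for which the support is a single interval and the measure non-critical is open and closed). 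Then one writes, via the standard formula,
\[
\frac{d\mu_{V_t}}{dx} = \frac{1}{\pi}\sqrt{(x-a_t)(b_t-x)}\,\, Q_t(x),
\]
where $Q_t$ is given by a principal-value integral involving $V_t'$, hence is an affine function of $t$: $Q_t = (1-t)\,q_0 + t\,q_1$ with $q_0,q_1$ related to $S_0,S_1$. Since $S_0,S_1 \geq \bar c$ on $[a,b]$ and $Q_t$ is a convex combination (up to the harmless discrepancy between $[a,b]$ and $[a_t,b_t]$, which is controlled by the smooth dependence of the endpoints and can be absorbed by shrinking $\bar c$ slightly), we get $S_t \geq \bar c/2$ say, uniformly in $t$, which is exactly non-criticality for all $t$. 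The main obstacle here is justifying rigorously that the support stays a single interval for all intermediate $t$: a priori an interpolation of two one-cut potentials need not be one-cut. I expect this is handled precisely by Hypothesis \ref{hypo2} (the effective potential attains its minimum on $[a,b]^c$ at the boundary), which by the affine combination structure of $Q_t$ propagates to all $t$ and rules out the opening of a new band; this is the step where the cited results \cite{Shch, borot-guionnet2} and Lemma \ref{central} (referenced in the Remark) do the real work.

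For the second assertion — that one may assume $V$ grows as fast as desired, up to a total-variation–negligible modification of $\mathbb P_V^N$ and $\mathbb P_{V+W}^N$ — the idea is a simple truncation/confinement argument. Fix a large $R$ so that $[a-\eps,b+\eps]\subset(-R,R)$ and modify $V$ (and correspondingly $V+W$) outside $(-R,R)$ to a new potential $\bar V$ that agrees with $V$ on $(-R,R)$, is still $C^{30}$, and grows as fast as one wishes (e.g.\ like $|x|^k$ for any prescribed $k$, or like $e^{|x|}$) at infinity; this does not change the equilibrium measure since that measure is supported in $[a,b]$ and the obstacle condition is a local statement near $[a,b]$, so Hypotheses \ref{hypo1}--\ref{hypo2} are untouched. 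It remains to check that $\mathbb P_V^N$ and $\mathbb P_{\bar V}^N$ are exponentially close in total variation. This follows from the confinement estimate invoked in the paper just before Hypothesis \ref{hypo2}: by \cite[Lemma 3.1]{borot-guionnet2} together with Hypothesis \ref{hypo2}, all eigenvalues lie in $[a-\eps,b+\eps]$ with $\mathbb P_V^N$-probability at least $1 - C N^{-p}$, and in fact (as used elsewhere in the paper, e.g.\ in Theorem \ref{thm:T}) with probability $1 - N^{-N/C}$. On the event that all $\lambda_i\in(-R,R)$ the densities of $\mathbb P_V^N$ and $\mathbb P_{\bar V}^N$ coincide up to the ratio of partition functions $Z^N_V/Z^N_{\bar V}$, which tends to $1$ at the same exponential rate, so
\[
\big\|\mathbb P_V^N - \mathbb P_{\bar V}^N\big\|_{TV} \leq N^{-N/C'}
\]
for some $C'>0$, and similarly for $V+W$. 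Hence replacing $(V,V+W)$ by $(\bar V, \bar V + W)$ changes all the quantities in \eqref{aptr} and in Theorem \ref{thm:univ} by a negligible amount, which proves the claim. This second part is routine; the genuine content of the lemma, and the step I would expect to spend the most care on, is the propagation of the one-cut and non-criticality properties along the interpolation.
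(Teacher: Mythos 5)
Your second assertion (modifying $V$ far from the support at a negligible total-variation cost, using the confinement estimates tied to Hypothesis \ref{hypo2}) is essentially the paper's argument and is fine. The problem is in the first part. The crucial step — that the support of $\mu_{V_t}$ stays a single interval with (essentially) the same endpoints for all intermediate $t$ — is exactly what you do not prove: you acknowledge that an interpolation of two one-cut potentials need not a priori be one-cut, and you defer this to an implicit-function-theorem/open-closed argument plus an "I expect this is handled by Hypothesis \ref{hypo2}" appeal, which is not an argument. Moreover, your claim that $Q_t=(1-t)q_0+t\,q_1$ is affine in $t$ is circular as stated: the principal-value formula expressing the density in terms of $V_t'$ involves the (unknown) endpoints $a_t,b_t$, so $Q_t$ is affine in $t$ only once you already know the support is the fixed interval $[a,b]$ for all $t$ — which is the conclusion you are trying to reach. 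Similarly, the assertion that Hypothesis \ref{hypo2} "propagates to all $t$ by the affine combination structure" is unjustified for the same reason.

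The paper closes this gap with a one-line observation that makes all of your machinery unnecessary: after the affine reduction, $\mu_V$ and $\mu_{V+W}$ have the \emph{same} support $\Sigma$, and the equilibrium measure for a potential $U$ is characterized by $\beta\int\log|x-y|\,d\mu(y)-U(x)\le c$ with equality on ${\rm supp}(\mu)$. This characterization is linear in the pair $(U,\mu)$, so the convex combination $(1-t)\mu_V+t\,\mu_{V+W}$ satisfies it for $V_t=V+tW$ (equality holds on $\Sigma$ for both terms, the inequality off $\Sigma$ for both), whence by uniqueness $\mu_{V_t}=(1-t)\mu_V+t\,\mu_{V+W}$. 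The support is then $\Sigma$ for every $t$ and the density is $S_t\sqrt{(x-a)(b-x)}$ with $S_t=(1-t)S_0+tS_1\ge\bar c$, i.e. Hypothesis \ref{hypo1} propagates with no loss in the constant. You should replace your endpoint-tracking argument by this identity; without it (or an actual proof of the one-cut propagation), your proof of the first assertion is incomplete.
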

\begin{proof}
Let $\Sigma$ denote the support of $\mu_V$ and $\mu_{V+W}$.
Following \cite[Lemma 5.1]{borot-guionnet},  the measure $\mu_{V_t}$ is simply given by
$$\mu_{V_t}=(1-t)\mu_V+t \mu_{V+W}.$$
Indeed, $\mu_V$ is uniquely determined by the fact that there exists a constant $c$ such that
$$\beta\int\log |x-y|d\mu_V(x)-V\le c$$
with equality on the support of $\mu_V$, and this property extends to  linear combinations. 
As a consequence the support  of $\mu_{V_t}$  is $\Sigma$,
and its density is bounded away from zero on $\Sigma$. This shows that Hypothesis \ref{hypo1}
is fulfilled for all $t\in [0,1]$.

Furthermore, we can modify $\mathbb P_{V}^N$ and $\mathbb P_{V+W}^N$ outside an open neighborhood of $\Sigma$ without
changing the final result, as eigenvalues will quit  this neighborhood only
with very small probability under our assumption of non-criticality according to the large deviation 
estimates developped in  \cite{BAGD,borot-guionnet} and culminating in   \cite{borot-guionnet2} as follows:

\begin{align*}
\limsup_{N\ra\infty}\frac{1}{N}\ln \mathbb P^{V}_{N}\left[\exists\, i\,:\,\lambda_i \in F\right] & \le  -\frac{\beta}{2}\,\inf_{x \in F} \tilde{U}_{V}(x), \\
\liminf_{N\ra\infty}\frac{1}{N}\ln \mathbb P^{V}_{N}\left[\exists\, i\,:\,\lambda_i \in \Omega\right] & \ge  -\frac{\beta}{2}\,\inf_{x \in \Omega} \tilde{U}_{V}(x).
\end{align*}
where $\tilde{U}_V:=U_V-\inf U_V$, and $U_V$ is defined as in \eqref{hypo2}.
\end{proof}

Thanks to the above lemma and the discussion immediately before it, we can assume that $\mu_{V}$ and $\mu_{V+W}$ have the same support, that
$W$ is bounded, and that $V$ goes to infinity faster than $x^{p}$ for some $p>0$ large enough.

\subsection{Monge-Amp\`ere equation}
\label{sect:MA2}
Given the two probability densities $\mathbb P^N_{V_t}$ to $\mathbb P^N_{V_s}$
as in \eqref{betamodel} with $0 \leq t \leq s \leq 1$,
by optimal transport theory it is well-known that there exists a (convex) function $\phi_{t,s}^N$ such that
$\nabla \phi_{t,s}^N$ pushes forward $\mathbb P^N_{V_t}$ onto $\mathbb P^N_{V_s}$ and which satisfies
the Monge-Amp\`ere equation
$$
\det(D^2 \phi_{t,s}^N)=\frac{\rho_t}{\rho_s(\nabla \phi_{t,s}^N)},\qquad \rho_\tau:=\frac{d\mathbb P_{V_\tau}^N}{d\lambda_1\ldots\,d\lambda_N}
$$
(see for instance \cite[Chapters 3 and 4]{Vi} or the recent survey paper \cite{DF} for an account on optimal transport theory and its link
to the Monge-Amp\`ere equation).

Because $\phi_{t,t}(x)=|x|^2/2$ (since $\nabla\phi_{t,t}$ is the identity map),
we can differentiate the above equation with respect to $s$ and set $s=t$ to get
\begin{equation}
\label{eq:laplace}\Delta \psi^N_t= c_t^N-\beta\sum_{i<j}\frac{\partial_i \psi^N_t-\partial_j\psi^N_t}{\lambda_i-\lambda_j}
+N\sum_i W(\lambda_i) +N \sum_iV_t'(\lambda_i)\partial_i \psi^N_t,
\end{equation}
where $\psi_t^N:=\partial_s\phi_{t,s}^N|_{s=t}$ and
$$c^N_t:=-N\int \sum_iW(\lambda_i)  \,d\mathbb P^N_{V_t}= \partial_t \log Z^N_{V_t}\,.$$
Although this is a formal argument, it suggests to us a way to construct maps $T_{0,t}^N:\R^N\to\R^N$
sending $\mathbb P_{V}^N$ onto $\mathbb P_{V_t}^N$:
indeed, if $T_{0,t}^N$ sends $\mathbb P_{V}^N$ onto $\mathbb P_{V_t}^N$
then $\nabla \phi_{t,s}^N\circ T_{0,t}^N$ sends $\mathbb P_{V}^N$ onto $\mathbb P_{V_s}^N$.
Hence, we may try to find $T_{0,s}^N$ of the form $T_{0,s}^N=\nabla \phi_{t,s}^N\circ T_{0,t}^N+o(s-t)$.
By differentiating this relation 
with respect to $s$ and setting $s=t$ we obtain
$\partial_t T^N_{0,t}=\nabla  \psi^N_t(T^N_{0,t})$.

Thus, to construct a transport map $T^N$ from 
$\mathbb P^N_{V}$ onto $\mathbb P^N_{V+W}$ 
we could first try to find $\psi_t^N$ by solving \eqref{eq:laplace}, and then
construct  $T^N$ solving the ODE
$\dot X^N_t=\nabla \psi^N_t(X^N_t)$ and setting $T^N:=X_1^N$.
We notice that, in general, $T^N$ is not an optimal transport map for the quadratic cost.

Unfortunately, finding an exact solution of \eqref{eq:laplace} enjoying ``nice'' regularity estimates that 
are uniform in $N$ seems extremely difficult. So, instead,
we make  
an ansatz on the structure of $\psi_t^N$ (see \eqref{eq:psiN} below):
the idea is that at first order eigenvalues do not interact, then at order $1/N$ eigenvalues interact at most by pairs,
and so on.
As we shall see, in order to construct a function which enjoys nice regularity estimates  and satisfies \eqref{eq:laplace} 
up to a error that goes to zero as $N\to \infty$, it will be enough to stop the expansion at $1/N$.
Actually, while the argument before provides us the right intuition, 
we notice that there is no need to assume that the vector field generating the flow $X_t^N$ is a gradient,
so we will consider general vector fields $\YY_t^N=(\YY_{1,t}^N,\ldots,\YY_{N,t}^N):\R^N\to\R^N$ that approximately solve
\begin{equation}
\label{eq:laplace2}
{\rm div} \YY^N_t= c_t^N-\beta\sum_{i<j}\frac{\YY^N_{i,t}-\YY^N_{j,t}}{\lambda_i-\lambda_j}
+N\sum_i W(\lambda_i) +N \sum_i V_t'(\lambda_i) \YY^N_{i,t},
\end{equation}

We begin by checking that the flow of an approximate solution of  \eqref{eq:laplace2} gives an approximate transport map.

\subsection{Approximate Jacobian equation}
\label{sect:approx}
Here we show that if a $C^1$ vector field $\YY^N_t$ approximately satisfies \eqref{eq:laplace2}, 
then its flow
$$\dot X^N_t=\YY^N_t(X^N_t),\qquad X^N_0=\operatorname{Id},$$
produces almost a transport map.

More precisely, let $\YY^N_t:\R^N\to\R^N$ be a smooth vector field  and denote
$${\mathcal R}^N_t(\YY^N):= c_t^N-\beta\sum_{i<j}\frac{\YY^N_{i,t}-\YY^N_{j,t}}{\lambda_i-\lambda_j}
+N\sum_i W(\lambda_i) +N \sum_i V_t'(\lambda_i) \YY^N_{i,t}-{\rm div} \YY^N_{t}.$$

\begin{lem}
\label{lemma:flow}
Let $\chi:\mathbb R^N\to \mathbb R$ be a bounded measurable  function, and let $X^N_t$  be the flow of
$\YY^N_{t}$.
Then
$$\left|\int \chi(X^N_t)\, d\mathbb P^N_V- \int \chi\, d\mathbb P^N_{V_t}\right|\le\|\chi\|_\infty\int_0^t \|{\mathcal R}^N_s(\YY^N)\|_{L^1(\mathbb P^N_{V_s})}ds.$$
\end{lem}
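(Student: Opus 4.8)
The strategy is the classical one relating the flow of a vector field to the transport/continuity equation, adapted to keep track of the residual $\mathcal R^N_t$. First I would define, for each fixed $t\in[0,1]$, the pushforward measure $\mu_t := (X^N_t)_\sharp \mathbb P^N_V$ and compare it with $\mathbb P^N_{V_t}$. The natural quantity to differentiate is
$$
g(t) := \int \chi(X^N_t)\,d\mathbb P^N_V = \int \chi\,d\mu_t,
$$
but since $\chi$ is merely bounded measurable this is delicate; so instead I would work at the level of densities. Write $\rho_t := d\mathbb P^N_{V_t}/d\lambda$ and let $J_t$ denote the density of $\mu_t$ with respect to Lebesgue measure, i.e. $J_t = \bigl(\rho_0/\det(DX^N_t)\bigr)\circ (X^N_t)^{-1}$, which solves the continuity equation $\partial_t J_t + \mathrm{div}(J_t\,\YY^N_t) = 0$ in the weak sense (equivalently, along the flow, $\frac{d}{dt}\log\bigl(\rho_0(\lambda)/\det DX^N_t(\lambda)\bigr) = -(\mathrm{div}\,\YY^N_t)(X^N_t)$, which is just the Liouville/Jacobi formula $\frac{d}{dt}\det DX^N_t = (\mathrm{div}\,\YY^N_t)(X^N_t)\det DX^N_t$).

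Next I would compute $\frac{d}{dt}\log\rho_t$ along the flow. From the explicit form \eqref{betamodel}, $\log\rho_t(\lambda) = \beta\sum_{i<j}\log|\lambda_i-\lambda_j| - N\sum_i V_t(\lambda_i) - \log Z^N_{V_t}$, so differentiating in $t$ and using $\partial_t V_t = W$, $\partial_t\log Z^N_{V_t} = c^N_t$,
$$
\frac{d}{dt}\Bigl[\log\rho_t(X^N_t)\Bigr] = -N\sum_i W(X^N_{i,t}) - N\sum_i V_t'(X^N_{i,t})\,\YY^N_{i,t}(X^N_t) + \beta\sum_{i<j}\frac{\YY^N_{i,t}(X^N_t)-\YY^N_{j,t}(X^N_t)}{X^N_{i,t}-X^N_{j,t}} - c^N_t.
$$
Comparing with the evolution of $\log J_t$ along the flow, the difference $h_t := \log(\rho_t(X^N_t)/J_t(X^N_t))$ (equivalently $\log(\rho_t\circ X^N_t) - \log\rho_0 + \log\det DX^N_t$) satisfies $\frac{d}{dt}h_t = -\,\mathcal R^N_t(\YY^N)(X^N_t)$, with $h_0 = 0$ since $X^N_0=\mathrm{Id}$ and $\rho_0 = \rho_{V_0}$. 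Hence $h_t(\lambda) = -\int_0^t \mathcal R^N_s(\YY^N)(X^N_s(\lambda))\,ds$, i.e. $d\mu_t = e^{-h_t\circ (X^N_t)^{-1}}\,d\mathbb P^N_{V_t}$, or more usefully $d\mu_t/d\mathbb P^N_{V_t} = \exp\bigl(\int_0^t \mathcal R^N_s(\YY^N)\circ X^N_s\,ds \circ (X^N_t)^{-1}\bigr)$.

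Finally I would conclude by a direct estimate:
$$
\left|\int\chi\,d\mu_t - \int\chi\,d\mathbb P^N_{V_t}\right| \le \|\chi\|_\infty \int \left|\frac{d\mu_t}{d\mathbb P^N_{V_t}} - 1\right| d\mathbb P^N_{V_t} = \|\chi\|_\infty\,\|\mu_t - \mathbb P^N_{V_t}\|_{TV},
$$
and then bound the total variation by transporting the integral back to $\mathbb P^N_V$ via $X^N_t$: using $\bigl|e^{-u}-1\bigr|\le |u|\,$ is not quite available without a sign, so instead I would note that $\int|d\mu_t - d\mathbb P^N_{V_t}| = \int |d\mu_t - d\mathbb P^N_{V_t}|$ and, writing $F(t):=\|\mu_t-\mathbb P^N_{V_t}\|_{TV}$ is awkward directly; the cleanest route is to differentiate $t\mapsto \int \zeta\,d\mu_t$ for smooth bounded $\zeta$, getting $\frac{d}{dt}\int\zeta\,d\mu_t = \int \nabla\zeta\cdot\YY^N_t\,d\mu_t$, combine with $\frac{d}{dt}\int\zeta\,d\mathbb P^N_{V_t} = \int(\nabla\zeta\cdot\YY^N_t + \mathcal R^N_t(\YY^N)\cdot(\text{correction}))\dots$ — to avoid this bookkeeping I would simply use the density identity above: since $h_t\circ(X^N_t)^{-1}$ integrates against $\mathbb P^N_{V_t}$ and $|e^{a}-1|\le |a|e^{|a|}$, and (after a further truncation/approximation argument to handle integrability) one gets $\|\mu_t - \mathbb P^N_{V_t}\|_{TV}\le \int_0^t\|\mathcal R^N_s(\YY^N)\|_{L^1(\mathbb P^N_{V_s})}\,ds$ by pushing the $L^1$ norm of $\mathcal R^N_s\circ X^N_s$ under $\mathbb P^N_V$ forward to an $L^1$ norm of $\mathcal R^N_s$ under $\mu_s$ and controlling $\mu_s$ by $\mathbb P^N_{V_s}$ iteratively (Grönwall-type). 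I expect the main obstacle to be precisely this last measure-theoretic step: justifying the differentiation under the integral sign and the change of variables rigorously when $\chi$ is only bounded measurable and the densities have unbounded logarithmic derivatives, which is why the reduction in Lemma \ref{propa} to the case where $V$ grows fast and $W$ is bounded is being used — it guarantees enough integrability for all these manipulations.
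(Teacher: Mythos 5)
Your setup is the same as the paper's: compare the pushforward density $u_t:=JX^N_t\,\rho_t(X^N_t)$ (your $J_t$ and $\rho_t$ read along the flow) with $\rho_0$, and your identity that along the flow $\partial_t\log\bigl(\rho_t(X^N_t)\,JX^N_t\bigr)=-\mathcal R^N_t(\YY^N)(X^N_t)$ is exactly the computation the paper makes. The genuine gap is in how you conclude from it. You propose $|e^{a}-1|\le |a|e^{|a|}$ plus a truncation and a ``Gr\"onwall-type'' comparison of $\mu_s$ with $\mathbb P^N_{V_s}$. As described, this does not yield the stated inequality: the factor $e^{|a|}$ (equivalently, the Gr\"onwall step, which would require bounding $\int|\mathcal R^N_s|\,d\mu_s$ by $\int|\mathcal R^N_s|\,d\mathbb P^N_{V_s}+\|\mathcal R^N_s\|_\infty\,\|\mu_s-\mathbb P^N_{V_s}\|_{TV}$) produces a multiplicative error of the order $\exp\bigl(\int_0^t\|\mathcal R^N_s\|_\infty\,ds\bigr)$, or demands pointwise control of $\int_0^t|\mathcal R^N_s\circ X^N_s|\,ds$, neither of which is available; $\mathcal R^N_s$ contains terms of size $N^2$ in sup norm, so such a factor would ruin the constant-$1$ bound, which is what makes the lemma useful when $\|\mathcal R^N_s\|_{L^1(\mathbb P^N_{V_s})}=O((\log N)^3/N)$.

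The fix is already contained in your own formula and is what the paper does. Since $u_t=\rho_0\exp\bigl(-\int_0^t\mathcal R^N_s(\YY^N)(X^N_s)\,ds\bigr)$, the fundamental theorem of calculus (equivalently, ``the derivative of the norm is at most the norm of the derivative'') gives
\[
\int|u_t-\rho_0|\,dx\;\le\;\int_0^t\int\bigl|\mathcal R^N_s(\YY^N)\bigr|(X^N_s)\,u_s\,dx\,ds,
\]
and for each fixed $s$ the inner integral equals \emph{exactly} $\|\mathcal R^N_s(\YY^N)\|_{L^1(\mathbb P^N_{V_s})}$ by the change of variables $y=X^N_s(x)$, because $u_s\,dx=\rho_s(X^N_s)\,JX^N_s\,dx=\rho_s(y)\,dy$; note this is an integral against $\mathbb P^N_{V_s}$, not against your $\mu_s$, so no Gr\"onwall, truncation, or sign discussion is needed. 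Your measurability worries about $\chi$ are likewise harmless: as in your first display (and in the paper), one reduces immediately to the $L^1$ distance of the densities, and all the differentiations are performed on the smooth quantity $JX^N_t\,\rho_t(X^N_t)$, not on $\chi$.
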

\begin{proof}
Since $\YY^N_t\in C^1$, by Cauchy-Lipschitz Theorem its flow is a bi-Lipchitz homeomorphism.

If $JX^N_t$ denotes the Jacobian of $X^N_t$ and $\rho_t$ the density of $\mathbb P^N_{V_t}$, by the change of variable formula it follows
 that
 $$\int \chi\, d\mathbb P^N_{V_t}= \int \chi(X^N_t)JX^N_t \rho_t(X^N_t) \,dx$$
thus
\begin{equation}\label{coco}
\left|\int \chi(X^N_t) \,d\mathbb P^N_V- \int \chi\, d\mathbb P^N_{V_t}\right|\le\|\chi\|_\infty \int |\rho_0-JX^N_t \rho_t(X^N_t)| \,dx=:\|\chi\|_\infty A_t\end{equation}
Using that $\partial_t(JX^N_t )={\rm div} \YY^N_{t}\, JX^N_t$ and that the derivative of the norm is smaller than the norm of the derivative, we get
\begin{align*}
|\partial_t A_t|&\le\int \Bigl|\partial_t \bigl(JX^N_t \rho_t(X^N_t)\bigr)\Bigr| \,dx\\
&= \int|{\rm div} \YY^N_{t}\, JX^N_t\, \rho_t(X^N_t)+JX^N_t \,(\partial_t\rho_t)(X^N_t)+JX^N_t\,\nabla \rho_t(X^N_t)\cdot \partial_t X^N_t|\,dx\\
&=\int |{\mathcal R}^N_t(\YY)|(X^N_t)\,JX^N_t \,\rho_t(X^N_t)\,dx\\
&=\int |{\mathcal R}^N_t(\YY)|\,d\mathbb P^N_{V_t}.
\end{align*}
Integrating the above estimate in time completes the proof.
\end{proof}
By taking the supremum over all functions $\chi$ with $\|\chi\|_\infty \leq 1$, the lemma above gives:

\begin{cor}\label{cortr}
Let $X^N_t$  be the flow of
$\YY_t^N$, and set $\hat {\mathbb P}_t^N:=(X^N_t)_\#\mathbb P^N_V$ the image of $\mathbb P^N_V$ by $X^N_t$. Then
$$\|\hat{\mathbb P}_t^N - \mathbb P^N_{V_t}\|_{TV}\leq \int_0^t \|{\mathcal R}^N_s(\YY^N)\|_{L^1(\mathbb P^N_{V_s})}ds.$$
\end{cor}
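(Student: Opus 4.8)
The plan is to deduce Corollary \ref{cortr} directly from Lemma \ref{lemma:flow} by invoking the dual characterization of the total variation distance,
$$\|\mu-\nu\|_{TV}=\sup\Bigl\{\,\Bigl|\int\chi\,d\mu-\int\chi\,d\nu\Bigr|\ :\ \chi:\R^N\to\R\ \text{measurable},\ \|\chi\|_\infty\le 1\,\Bigr\},$$
valid for Borel probability measures $\mu,\nu$. (If one normalizes $\|\cdot\|_{TV}$ to be half of this supremum, the claimed bound only becomes stronger, so nothing is lost in either convention.)

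First I would rewrite the left-hand side appearing in Lemma \ref{lemma:flow} using the definition of push-forward: since $\hat{\mathbb P}_t^N=(X_t^N)_\#\mathbb P_V^N$, for every bounded measurable $\chi$ one has $\int\chi\,d\hat{\mathbb P}_t^N=\int\chi(X_t^N)\,d\mathbb P_V^N$, so that Lemma \ref{lemma:flow} reads precisely
$$\Bigl|\int\chi\,d\hat{\mathbb P}_t^N-\int\chi\,d\mathbb P_{V_t}^N\Bigr|\le\|\chi\|_\infty\int_0^t\|{\mathcal R}_s^N(\YY^N)\|_{L^1(\mathbb P_{V_s}^N)}\,ds.$$
Restricting to $\chi$ with $\|\chi\|_\infty\le 1$ makes the prefactor disappear; then, since the right-hand side does not involve $\chi$, I would take the supremum over all such $\chi$ and read off the conclusion from the variational formula recalled above. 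Note that $\hat{\mathbb P}_t^N$ is a genuine Borel probability measure because $X_t^N$ is a bi-Lipschitz homeomorphism (Cauchy--Lipschitz, as already used in the proof of Lemma \ref{lemma:flow}), and that Lemma \ref{lemma:flow} was stated for all bounded measurable $\chi$, so no approximation of test functions by continuous ones is needed.

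There is essentially no obstacle at this stage: all the analytic content — the change of variables, the Jacobian identity $\partial_t(JX_t^N)={\rm div}\,\YY_t^N\,JX_t^N$, and the Grönwall-type time integration — has already been carried out in Lemma \ref{lemma:flow}. The only point requiring a moment's care is the normalization convention for $\|\cdot\|_{TV}$, which at worst affects the statement by a harmless factor $2$.
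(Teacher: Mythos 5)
Your proposal is correct and matches the paper's argument: the corollary is obtained exactly by rewriting $\int\chi(X_t^N)\,d\mathbb P_V^N=\int\chi\,d\hat{\mathbb P}_t^N$ and taking the supremum in Lemma \ref{lemma:flow} over bounded measurable $\chi$ with $\|\chi\|_\infty\le 1$. Your remark on the total variation normalization is a harmless clarification and does not change the argument.
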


\section{Constructing an approximate solution to \eqref{eq:laplace}}
\label{sect:laplace}

Fix $t \in [0,1]$ and define the random measures
\begin{equation}
\label{defemp}
L_N:=\frac{1}{N}\sum_{i}\delta_{\lambda_i}\quad\mbox{ 
and }\quad M_N:=\sum_i\delta_{\lambda_i}-N\mu_{V_t}.\end{equation}
As we explained in the previous section, a natural ansatz to find an approximate solution of 
\eqref{eq:laplace} is given by
\begin{equation}
\label{eq:psiN}
\psi^N_t(\lambda_1,\ldots,\lambda_N):=\int  \Bigl[\psi_{0,t}(x) +\frac{1}{N}  \psi_{1,t} (x) \Bigr]\,
dM_N(x) 
+ \frac{1}{2N}\iint \psi_{2,t} (x,y) \,dM_N(x)\,dM_N(y), 
\end{equation}
where (without loss of generality) we assume that $ \psi_{2,t} (x,y) = \psi_{2,t} (y,x)$.

Since we do not want to use gradient of functions but general vector fields (as this gives us more flexibility), in order to find an ansatz for
an approximate solution of \eqref{eq:laplace2} we compute first the gradient of $\psi$:
$$
\partial_i \psi^N_{t}=\psi_{0,t}'(\lambda_i)+\frac{1}{N}\psi_{1,t}'(\lambda_i)+
\frac{1}{N}\xi^N_{1,t}(\lambda_i,M_N),\qquad \xi^N_{1,t}(x,M_N):=\int \partial_1\psi_{2,t}(x,y)\,dM_N(y).
$$
This suggests us the following ansatz for the components of $\YY^N_t$:
\begin{equation}
\label{eq:YN}
\YY^N_{i,t}(\lambda_1,\ldots,\lambda_N):=\yy_{0,t}(\lambda_i)+\frac{1}{N}\yy_{1,t}(\lambda_i)+
\frac{1}{N}\xxi_{t}(\lambda_i,M_N),\quad \xxi_{t}(x,M_N):=\int \zz_t(x,y)\,dM_N(y),
\end{equation}
for some functions $\yy_{0,t},\yy_{1,t}:\R\to\R$, $\zz_t:\R^2\to \R$.

Here and in the following, given a function of two variables $\psi,$ we write $\psi\in C^{s,v}$ to denote that it is $s$ times continuously differentiable with respect to the first variable
and $v$ times with respect to the second.

The aim of this section is to prove the following result:
\begin{prop}\label{prop:key}Assume
$V',W\in C^{r}$ with $r\ge 30$.
Then, there exist  $\yy_{0,t}\in C^{r-2}$, $\yy_{1,t}\in C^{r-8}$, and $\zz_t\in C^{s,v}$ for $s+v \leq r-5$,
such that
$${\mathcal R}^N_t:=
\biggl( c_t^N-\beta\sum_{i<j}\frac{\YY^N_{i,t}-\YY^N_{j,t}}{\lambda_i-\lambda_j}
+N\sum_i W(\lambda_i) +N \sum_i V_t'(\lambda_i) \YY^N_{i,t}\biggr)-{\rm div} \YY^N_{t}$$
satisfies 
$$\|{\mathcal R}^N_t\|_{L^1({\mathbb P^N_{V_t}})}\le  C\,\frac{(\log N)^3 }{N}$$
for some positive constant $C$ independent of $t\in [0,1]$.
\end{prop}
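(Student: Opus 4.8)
The plan is to plug the ansatz \eqref{eq:YN} into the expression for ${\mathcal R}^N_t$ and expand everything in powers of $1/N$, organizing the result as a sum of ``deterministic'' terms (integrals against $\mu_{V_t}$), linear statistics $\int f\,dM_N$, and bilinear statistics $\iint g\,dM_N\,dM_N$. First I would rewrite the three ``potential-type'' sums. For the interaction term, $\beta\sum_{i<j}\frac{\YY^N_{i,t}-\YY^N_{j,t}}{\lambda_i-\lambda_j}$: since $\YY^N_{i,t}$ depends on $\lambda_i$ through $\yy_{0,t}+\frac1N\yy_{1,t}$ and on the whole configuration through $\frac1N\xxi_t(\lambda_i,M_N)$, the leading piece is $\frac{\beta}{2}\sum_{i\ne j}\frac{\yy_{0,t}(\lambda_i)-\yy_{0,t}(\lambda_j)}{\lambda_i-\lambda_j}=\frac{\beta N^2}{2}\iint \frac{\yy_{0,t}(x)-\yy_{0,t}(y)}{x-y}\,dL_N(x)\,dL_N(y)$, and I would expand $L_N=\mu_{V_t}+\frac1N M_N$ to separate the $N^2$, $N$, and $O(1)$ contributions. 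The divergence term ${\rm div}\,\YY^N_t=\sum_i\partial_i\YY^N_{i,t}$ contributes $\sum_i\yy_{0,t}'(\lambda_i)+O(1)$, i.e.\ $N\int\yy_{0,t}'\,dL_N$ at leading order, plus terms of order $1$ coming from $\partial_i$ hitting $\xxi_t(\lambda_i,M_N)$. Finally $N\sum_i W(\lambda_i)=N^2\int W\,dL_N$ and $N\sum_i V_t'(\lambda_i)\YY^N_{i,t}=N^2\int V_t'\yy_{0,t}\,dL_N+N\int(V_t'\yy_{1,t}+V_t'\xxi_t)(\cdot)\,\dots$, again to be expanded via $L_N=\mu_{V_t}+\frac1N M_N$.

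The second step is to choose $\yy_{0,t},\yy_{1,t},\zz_t$ to kill the large terms order by order. At order $N^2$ (i.e.\ the terms that are $O(N^2)$ before dividing, which must vanish identically as functions on configuration space after integrating $L_N$ against $\mu_{V_t}$): using the characterization of the equilibrium measure (the effective potential $U_{V_t}$ is constant on the support), one gets a single scalar equation for $\yy_{0,t}$ of the form $V_t'(x)\yy_{0,t}(x)-\beta\int\frac{\yy_{0,t}(x)-\yy_{0,t}(y)}{x-y}d\mu_{V_t}(y)+W(x)=\text{const}$ on $[a_t,b_t]$; this is precisely a master-loop-type equation whose solution is obtained explicitly via the Stieltjes transform and the square-root structure of $\mu_{V_t}$ from Hypothesis \ref{hypo1}, and non-criticality ($S_t\ge\bar c$) gives the $C^{r-2}$ regularity of $\yy_{0,t}$. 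At order $N$, the coefficient multiplying $\int f\,dM_N$ must vanish; this determines $\zz_t$ (the ``two-point'' part, again solved by inverting the same linear operator in the first variable, now with a parameter $y$, giving $\zz_t\in C^{s,v}$, $s+v\le r-5$) together with an equation for $\yy_{1,t}$ that also involves the $c_t^N$ constant and $\partial_t\log Z^N_{V_t}$; solving it gives $\yy_{1,t}\in C^{r-8}$. The key mechanism throughout is that the operator $\Xi f(x):=V_t'(x)f(x)-\beta\int\frac{f(x)-f(y)}{x-y}d\mu_{V_t}(y)$ is invertible on a suitable space of functions on the support, with the inverse regularity-improving by a controlled number of derivatives — this is exactly why the $C^{30}$ hypothesis propagates into the stated regularity classes.

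After the cancellations, what remains in ${\mathcal R}^N_t$ is: (i) a genuinely bilinear term $\frac1N\iint(\ldots)\,dM_N\,dM_N$ (the ``square'' of the interaction and of the $V_t'\xxi_t$ term that cannot be absorbed at order $1/N$), and (ii) lower-order deterministic and single-$M_N$ remainders of size $O(1/N)$, some involving the diagonal $\int\partial_2\zz_t(x,x)\,d(\ldots)$ and the $\zz_t$-diagonal from the interaction sum $\frac1N\sum_i$-type corrections. To bound $\|{\mathcal R}^N_t\|_{L^1(\mathbb P^N_{V_t})}$ by $C(\log N)^3/N$ I would invoke a priori control on linear and bilinear statistics under $\mathbb P^N_{V_t}$: concentration estimates giving $\|\int f\,dM_N\|_{L^p(\mathbb P^N_{V_t})}\lesssim \|f\|\,\log N$ and $\|\iint g\,dM_N\,dM_N\|_{L^1}\lesssim \|g\|\,(\log N)^2$ (this is the content referred to as Lemma \ref{central}, derived from loop equations / the large-deviation and rigidity estimates of \cite{BAGD,borot-guionnet,borot-guionnet2}); the $1/N$ prefactor on the bilinear term then yields the claimed rate, with the extra $\log N$ coming from combining these bounds. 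The main obstacle, and the part requiring real work, is establishing these concentration/correlation bounds with the correct powers of $\log N$ uniformly in $t\in[0,1]$ and uniformly over the relevant test functions (whose norms must be tracked carefully, since $\zz_t$ and its derivatives enter), and secondarily verifying that the diagonal and boundary contributions — where $\frac1{x-y}$-type kernels are evaluated near coinciding arguments or near the edge $\{a_t,b_t\}$ — are genuinely $O(1/N)$ rather than larger; this is where non-criticality and the connectedness of the support are essential, since a vanishing density would make $\yy_{0,t}$ and $\zz_t$ singular and destroy the uniform estimates.
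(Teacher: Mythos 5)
Your overall strategy is the same as the paper's: plug the ansatz \eqref{eq:YN} into ${\mathcal R}^N_t$, recenter $L_N$ by $\mu_{V_t}$ using the equilibrium relation \eqref{bn}, kill the fluctuating terms order by order through the hierarchy \eqref{defpsi} solved by inverting the master operator $\Xi$ (that inversion, via Tricomi's formula and non-criticality, is what the paper's Lemma \ref{central} contains --- not the concentration bounds you attribute to it, which are Lemma \ref{conch} and its corollary), and then bound what is left by concentration plus loop-equation moment estimates on Fourier modes. However, there are two genuine gaps. First, you never control the leftover \emph{deterministic} constant. After imposing \eqref{defpsi}, the expansion of ${\mathcal R}^N_t$ still contains a constant $C^N_t$ which collects $c^N_t=\partial_t\log Z^N_{V_t}$ (a quantity of order $N^2$) together with the $\mu_{V_t}\otimes\mu_{V_t}$ and diagonal contributions produced by the recentering; nothing in your argument shows it is $O((\log N)^3/N)$. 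The constants $c$, $c(y)$, $c'$ in \eqref{defpsi} are not free parameters you can tune to cancel it --- they are forced by the solvability condition in Lemma \ref{central} --- and letting ``$c^N_t$ enter the equation for $\yy_{1,t}$'', as you suggest, does not help since that equation only removes a term linear in $M_N$. The paper handles this with a separate observation: ${\mathcal R}^N_t=-{\mathcal L}\YY^N_t+N\sum_iW(\lambda_i)+c^N_t$ where ${\mathcal L}$ is the generator appearing in the first loop equation, and $\mathbb E_{\mathbb P^N_{V_t}}[{\mathcal L}\YY]=0$ for any vector field by integration by parts, so $\mathbb E[{\mathcal R}^N_t]=0$ and the $L^1$ bound already proved for the random part transfers to $|C^N_t|$. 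Without this step (or a precise expansion of $\partial_t\log Z^N_{V_t}$ to order $1/N$), the stated bound does not follow.

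Second, your description of the remainder is structurally incomplete: since $\xxi_t(\cdot,M_N)$ is itself linear in $M_N$, recentering the term $-\frac{\beta N}{2}\iint\frac{\xxi_t(x,M_N)-\xxi_t(y,M_N)}{x-y}\,dL_N\,dL_N$ leaves not only quadratic terms but a genuinely \emph{trilinear} term $\frac{\beta}{2N}\iiint\frac{\zz_t(x,y)-\zz_t(\tilde x,y)}{x-\tilde x}\,dM_N\,dM_N\,dM_N$ (see \eqref{eq:EN}), so your claim that only bilinear and single-$M_N$ remainders survive is wrong. This cubic term is precisely the source of the $(\log N)^3$ in the statement; with only the bilinear bookkeeping you describe, the best you could claim is $(\log N)^2/N$, and your vague ``extra $\log N$ from combining the bounds'' papers over the dominant contribution. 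Controlling it requires fourth-moment bounds on $M_N(e^{i\xi\cdot})$ (obtained in the paper from the second and third loop equations, with the $|\xi|^7$-weighted $\log N$ bounds) combined with H\"older and the Fourier representation of $\partial_1\zz_t$; your proposed toolbox could be upgraded to do this, but as written the accounting of the error and of the required moment estimates is not sufficient to reach the claimed rate.
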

The proof of this proposition is pretty involved, and will take the rest of the section.

\subsection{Finding an equation for $\yy_{0,t}, \yy_{1,t},\zz_{t}$.}

Using \eqref{eq:YN} we compute
$$
{\rm div}\YY^N_t=N\int \yy_{0,t}'(x)\,dL_N(x)+\int \yy_{1,t}'(\lambda)\,dL_N(x)
+\int \partial_1\xxi_{t}(x,M_N)\,dL_N(x)
+\eeta(L_N),
$$
where, given a measure $\nu$, we set
\begin{align*}
\eeta(\nu):=\int\partial_{2}\zz_{t}(y,y)\,d\nu(y).
\end{align*}
Therefore, recalling that $M_N=N (L_N-\mu_{V_t})$, we get
\begin{align*}
{\mathcal R}^N_t&=
-\frac{\beta N^2}{2}  \iint\frac{\yy_{0,t}(x)-\yy_{0,t}(y)}{x-y}\,dL_N(x)\,dL_N(y)
+N^2\int V'_t \,\yy_{0,t} \,dL_N+N^2 \int W \,dL_N\\
&
-\frac{\beta N}{2}  \iint\frac{\yy_{1,t}(x)-\yy_{1,t}(y)}{x-y}\,dL_N(x)\,dL_N(y)
+N\int V'_t \,\yy_{1,t} \,dL_N\\
&
-\frac{\beta N}{2}  \iint\frac{\xxi_t(x,M_N) - \xxi_t(y,M_N)}{x-y}\,dL_N(x)\,dL_N(y)
+N\int V'_t(x)\,\xxi_t(x,M_N)\, dL_N(x)\\
& -\frac{1}{N} \eeta(M_N) - N\Bigl(1-\frac{\beta}2\Bigr)  \int \yy_{0,t}'\,dL_N\\
& - \Bigl(1-\frac{\beta}2\Bigr)  \int \yy_{1,t}'\,dL_N
-\Bigl(1-\frac{\beta}2\Bigr)  \int \partial_1\xxi_{t}(x,M_N)\,dL_N(x)
-\eeta(\mu_{V_t})
+\tilde c^{N}_t, 
\end{align*}
where $\tilde c^N_t$ is a constant and we use the convention that, when we integrate a function of the form $\frac{f(x)-f(y)}{x-y}$
with respect to $L_N\otimes L_N$, the diagonal terms give $f'(x)$. 

We now observe that $L_N$ converges towards $\mu_{V_t}$ as $N\to \infty$ \cite{Boutet},  see also \cite{BAG, AGZ} for the corresponding large deviation principle, and the latter minimizes $I_{V_t}$ (see \eqref{entr}). Hence, considering
$\mu_\varepsilon:=(x+\varepsilon f)_\#\mu_{V_t}$ and  writing that $I_{V_t}(\mu_{\varepsilon})\ge I_{V_t}(\mu_{V_t})$, by taking the derivative with respect to $\varepsilon$
at $\varepsilon =0$ we get
\begin{equation}\label{bn}
\int V'_t (x)f(x)\,d\mu_{V_t}(x)=\frac{\beta}{2}\iint \frac{f(x)-f(y)}{x-y}\, d\mu_{V_t}(x)\,d\mu_{V_t}(y)\end{equation}
for all smooth bounded functions $f:\R\to\R$. Therefore we can recenter $L_N$ by $\mu_{V_t}$ in the formula above: more precisely, if we set
\begin{equation}\label{defXi}\Xi f(x):=-\beta \int\frac{f(x)-f(y)}{x-y}\, d\mu_{V_t}(y)+V'_t(x)f (x),\end{equation}
then
\begin{multline*}
N^2\int V_t'\, f \,dL_N- \frac{\beta N^2}{2}\iint \frac{f(x)-f(y)}{x-y}\,dL_N(x)\,dL_N(y)\\
=N\int \Xi f\, dM_N-\frac{\beta}{2}\iint \frac{f(x)-f(y)}{x-y}\,dM_N(x)\,dM_N(y)
\end{multline*}
Applying this identity
to $f=\yy_{0,t},\yy_{1,t},\xxi_t(\cdot,M_N)$ and recalling the definition of $\xxi_t(\cdot,M_N)$ (see \eqref{eq:YN}), we find
\begin{align*}
{\mathcal R}^N_t&= N \int  [\Xi \yy_{0,t}+W]\,dM_N\\
& +\int \left( \Xi\yy_{1,t}+\Big(\frac{\beta}2 - 1 \Big)\bigg[\yy_{0,t}' +\int \partial_{1}\zz_{t}(z,\cdot)d\mu_{V_t}(z)\biggr]\right)\,dM_N\\ 
&+\iint dM_N(x)\,dM_N(y)\left(\Xi \zz_{t}(\cdot,y)[x]-\frac{\beta}{2}\frac{\yy_{0,t}(x)-\yy_{0,t}(y)}{x-y}\right)+C^N_t +E_N,
\end{align*}
where
$$  \Xi \zz_t(\cdot, y) [x] =-\beta\int \frac{\zz_t(x,y)-\zz_t(\tilde x,y )}{x-\tilde x }\,d\mu_{V_t}(\tilde x)+V'_t(x)\zz_t(x,y),$$
$C^N_t$ is a deterministic term,
and $E_N$ is a remainder that we will prove to be negligible:
\begin{equation}
\label{eq:EN}
\begin{split}
E_N&:= -\frac{1}{N}\int \partial_{2}\zz_{t}(x,x)\,dM_N(x)-\frac{1}{N}\Bigl(1-\frac{\beta}2\Bigr) \int \yy_{1,t}'\,dM_N\\
&-\frac{1}{N}\Bigl(1-\frac{\beta}2\Bigr)\iint \partial_1\zz_t(x,y)\,dM_N(x)\,dM_N(y)\\
&-\frac{\beta}{2N}\iint \frac{\yy_{1,t}(x)-\yy_{1,t}(y)}{x-y}\, dM_N(x)\,dM_N(y)\\
&-\frac{\beta}{2N}\iiint\frac{\zz_t(x,y)-\zz_t(\tilde x,y)}{x-\tilde x}\,dM_N(x)\,dM_N(y)\,dM_N(\tilde x).
\end{split} 
\end{equation}
Hence, for ${\mathcal R}^N_t$  to be small we want to impose
\begin{equation}
\label{defpsi}
\begin{split}
\Xi\yy_{0,t}&=-W+c, \\
 \Xi \zz_t(\cdot, y) [x]& =-\frac{\beta}{2} \frac{\yy_{0,t}(x)-\yy_{0,t}(y)}{x-y}+c(y),\\
\Xi\yy_{1,t}&=-\Bigl(\frac{\beta}{2}-1\Bigr)\bigg[\yy_{0,t}' + \int \partial_{1}\zz_{t}(z,\cdot)\,d\mu_{V_t}(z)\biggr]+c',
\end{split}
\end{equation}
 where $c,c'$ are some constant to be fixed later, and $c(y)$ does not depend on $x$.
 \subsection{Inverting the operator $\Xi$.}
We now prove a key lemma, that will allow us to find the desired functions $\yy_{0,t},\yy_{1,t},\zz_{t}$.
\begin{lem} \label{central}
Given $V:\R\to \R$, assume that $\mu_{V}$ has support given by $[a,b]$
and that
$$
\frac{d\mu_V}{dx}(x)=S(x)\sqrt{(x-a)(b-x)}
$$
with $S(x)\geq \bar c>0$ a.e. on $[a,b]$. 

Let $g:\mathbb R\ra\mathbb R$ be a $C^k$ function and assume that $V$ is of class $C^p$. Set
$$\Xi f(x):=-\beta\int\frac{f(x)-f(y)}{x-y} d\mu_V(x) +V'(x)f(x)$$
Then there exists a unique constant $c_g$ such that
the equation
$$\Xi f(x)=g(x)+c_g$$
has a  solution of class $C^{(k-2)\wedge (p-3)}$.  More precisely, for $j\le (k-2)\wedge (p-3)$ there is a finite constant $C_j$ such that
\begin{equation}\label{toto2b}
\|f\|_{C^j(\R)}\le 
 C_j \|g\|_{C^{j+2}(\R)} ,
\end{equation}
where, for a function $h$, $\|h\|_{C^j(\R)}:=\sum_{r=0}^j\|h^{(r)}\|_{L^\infty(\mathbb R)}$.

Moreover  $f$ (and its derivatives) behaves like $(g(x)+c_g)/V'(x)$ (and its corresponding derivatives) when $|x|\to +\infty$.

This solution will be denoted by $\Xi^{-1}g$.
\end{lem}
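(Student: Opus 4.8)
The plan is to reduce the equation $\Xi f=g+c_g$ to a classical finite--Hilbert--transform (airfoil) equation on the support $[a,b]$, solve it explicitly there, and then solve the equation pointwise off $[a,b]$; the work is then in checking that the two pieces glue to a function of the claimed regularity.

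First I would use the Euler--Lagrange identity \eqref{bn}, which (taking $f$ to range over all test functions) is equivalent to the pointwise relation $V'(x)=\beta\,\mathrm{P.V.}\!\int\frac{d\mu_V(y)}{x-y}$ on $(a,b)$. Substituting this into the definition of $\Xi$, the ``diagonal'' part of the principal-value integral cancels the term $V'(x)f(x)$, so that for $x\in(a,b)$ one has $\Xi f(x)=\beta\,\mathrm{P.V.}\!\int_a^b\frac{f(y)}{x-y}\,d\mu_V(y)$. Writing $\varphi:=f\,\frac{d\mu_V}{dx}=\tfrac1\pi\,f\,S\,\sqrt{(x-a)(b-x)}$, the equation on $[a,b]$ becomes
\[
\beta\,\mathrm{P.V.}\!\int_a^b\frac{\varphi(y)}{x-y}\,dy=g(x)+c_g,
\]
a finite--Hilbert--transform equation in the unknown $\varphi$. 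Here I would invoke the classical inversion theory (S\"ohngen--Tricomi; see e.g.\ Muskhelishvili): since $\frac{d\mu_V}{dx}$ vanishes exactly like a square root at $a,b$ and $S\ge\bar c>0$, one looks for $\varphi$ in the class of functions vanishing like $\sqrt{(x-a)(b-x)}$ at the endpoints; in that class the equation is solvable \emph{iff} $\int_a^b\frac{g(t)+c_g}{\sqrt{(t-a)(b-t)}}\,dt=0$, and since $\int_a^b\frac{dt}{\sqrt{(t-a)(b-t)}}=\pi\neq0$ this determines $c_g=-\tfrac1\pi\int_a^b\frac{g(t)}{\sqrt{(t-a)(b-t)}}\,dt$ uniquely. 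The solution is then the explicit singular integral
\[
\varphi(x)=-\frac{\sqrt{(x-a)(b-x)}}{\pi^2\beta}\,\mathrm{P.V.}\!\int_a^b\frac{g(t)+c_g}{\sqrt{(t-a)(b-t)}\,(t-x)}\,dt,
\]
and dividing by $\tfrac1\pi S\sqrt{(x-a)(b-x)}$ gives a closed formula for $f$ on $[a,b]$; local (Privalov-type) estimates for this integral, together with the cancellation of the $\sqrt{(x-a)(b-x)}$ factors, yield $f\in C^{k-2}([a,b])$ and $\|f\|_{C^j([a,b])}\le C_j\|g\|_{C^{j+2}}$, the competing exponent $p-3$ entering because $S$ is only as smooth as $V'$ permits.

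Off the support the problem is purely algebraic: for $x\notin[a,b]$ both $G(x):=\int\frac{d\mu_V(y)}{x-y}$ and $\int_a^b\frac{f(y)}{x-y}\,d\mu_V(y)$ are ordinary integrals, the latter involving only the already–determined values $f|_{[a,b]}$, so $\Xi f=g+c_g$ forces
\[
f(x)=\frac{g(x)+c_g-\beta\int_a^b\frac{f(y)}{x-y}\,d\mu_V(y)}{V'(x)-\beta G(x)}.
\]
Using non-criticality one checks that $V'-\beta G=U_V'$ does not vanish on $[a,b]^c$ (near $a,b$ because the square-root coefficient of the density stays bounded below, for $|x|$ large because $V'\to\pm\infty$ while $G\to0$), so $f$ is well defined and smooth there; and since $G(x)\to0$ and $\int\frac{f\,d\mu_V}{x-\cdot}\to0$ as $|x|\to\infty$, $f$ and its derivatives behave like those of $(g+c_g)/V'$ at infinity, as claimed.

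It remains to glue, and this is the crux: one must show the inside and outside formulas for $f$ agree, together with all derivatives up to order $(k-2)\wedge(p-3)$, at the endpoints $a$ and $b$. This follows from the endpoint asymptotic expansion of the airfoil solution $\varphi$ — it is smooth in the local variables $\sqrt{x-a}$ and $\sqrt{b-x}$, so that $\varphi/\sqrt{(x-a)(b-x)}$ extends smoothly across the endpoints and matches the outside formula, whose denominator $U_V'$ also vanishes like a square root there — and it yields the global $C^{(k-2)\wedge(p-3)}$ regularity and the estimate \eqref{toto2b}. Uniqueness, which legitimizes the notation $\Xi^{-1}g$, is then easy: if $\Xi f=0$ with $f$ continuous, then $\mathrm{P.V.}\!\int_a^b\frac{(f\,d\mu_V/dx)(y)}{x-y}\,dy=0$ on $(a,b)$ forces $f\,\frac{d\mu_V}{dx}\equiv0$, hence $f\equiv0$ on $[a,b]$ since $S\ge\bar c$, and then the off-support formula gives $f\equiv0$ everywhere. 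I expect this endpoint analysis — controlling the singular integral near $a,b$ and matching it with the algebraic off-support solution — to be the main obstacle, and it is precisely where the non-criticality hypothesis $S\ge\bar c$ is indispensable (criticality would produce genuine singularities in $f$, consistent with the remark following Hypothesis~\ref{hypo1}).
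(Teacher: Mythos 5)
Your approach is essentially the paper's own: reduce to the finite Hilbert transform on $[a,b]$ via the equilibrium relation \eqref{bn}, invert by Tricomi (your single orthogonality condition determining $c_g$ is equivalent to the paper's choice of two constants so that its function $h$ vanishes at both endpoints), extend off the support by dividing by $\ell(x)=\beta\,PV\int\frac{d\mu_V(y)}{x-y}-V'(x)$, and glue at $a,b$ using the square-root vanishing of $\ell$ there. The endpoint matching that you single out as the crux is exactly where the paper spends its effort, Taylor-expanding the numerator $k$ against $\ell\sim\sqrt{|x-a|}$ order by order; your sketch gives the right mechanism but asserts rather than proves it, which is acceptable as a plan.

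One step, however, is genuinely unjustified as written: the claim that $V'-\beta G=U_V'$ does not vanish on $[a,b]^c$. Your two reasons cover only a neighborhood of $\{a,b\}$ (simple zeros of $(V')^2+2\beta F$, by non-criticality) and the regime $|x|\to\infty$; in the intermediate region $U_V'$ may perfectly well vanish (for instance $U_V$ can have a local maximum followed by a local minimum outside the support, compatible even with Hypothesis \ref{hypo2}), and at such a point your formula $f=\bigl(g+c_g-\beta\int\frac{f(y)}{x-y}\,d\mu_V(y)\bigr)\big/\bigl(V'(x)-\beta G(x)\bigr)$ blows up, since the numerator has no reason to vanish there. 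The paper disposes of this by recalling (proof of Lemma \ref{propa}) that $V$ may be modified freely outside $[a-\eps,b+\eps]$ at negligible cost in total variation, so that one may assume $\ell$ vanishes only at $\{a,b\}$ and nowhere else on $[a,b]^c$; without this (or an equivalent device) the off-support construction fails.
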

Note that $Lf(x)=\Xi f'(x)$ can be seen as the asymptotics of the infinitesimal generator of the Dyson Brownian motion taken in the set where
the spectral measure approximates $\mu_V$. This operator is central in our approach, as much as the Dyson Brownian motion is central to prove universality in e.g.
\cite{ESYY,BEY1,BEY2}.
\begin{proof}
As a consequence of \eqref{bn}, we have
\begin{equation}
\label{eq:muV V'}
\beta\,PV\int\frac{1}{x-y} \,d\mu_{V}(y)=V'(x)\qquad \text{on the support of $\mu_{V}$.}
\end{equation}
Therefore  the equation $\Xi f(x)=g(x)+c_g$ on the support of $\mu_{V}$ amounts to
\begin{equation}
\label{eq:tri}
\beta\,PV \int \frac{f(y)}{x-y} \,d\mu_{V}(y)= g(x)+c_g\qquad \forall\,x \in [a,b].
\end{equation}
Let us write $$d(x):=d\mu_{V}/dx =S(x)\sqrt{(x-a)(b-x)}$$ with $S$  positive inside the support $[a,b]$.
We claim that
$S\in C^{p-3}([a,b])$.

Indeed, by \eqref{bn} with $f(x)=(z-x)^{-1}$ for $z\in [a,b]^c$,  we find that the Stieltjes transform $G(z)=\int (z-y)^{-1}\,d\mu_{V}(y)$ 
 satisfies, for $z$ outside $[a,b]$,
 $$\frac{\beta}{2}G(z)^2=G(z)V'(\Re(z))+F(z),\qquad\mbox{ with } F(z)=\int \frac{V'(y)-V'(\Re(z))}{z-y}\,d\mu_{V}(y)\,.$$
Solving this quadratic equation so that $G\to 0$  as $|z|\to\infty$  yields
\begin{equation}
\label{eq:G}
G(z)=\frac{1}{\beta} \Big(V'(\Re(z))-\sqrt{ [V'(\Re(z))]^2+2\beta F(z)}\Big).
\end{equation}
Notice that $V'(\Re(z))^2+2\beta F(z)$ becomes real as $z$ goes to the real axis, and negative inside $[a,b]$.
Hence, since $-\pi^{-1} \Im G(z)$
converges to the density of $\mu_V$ 
as $z$ goes to the real axis (see e.g \cite[Theorem 2.4.3]{AGZ}),
we get
\begin{equation}
\label{eq:S}
-S(x)^2(x-a)(b-x)=(\beta\pi)^{-2}\left[V'(x)^2+2\beta F(x)\right].
\end{equation}
This implies in particular that
$\{a,b\}$ are the two points of the real line where $V'(x)^2+2\beta F(x)$ vanishes. 
Moreover $F(x)=-\int \int_0^1 V''(\alpha y+(1-\alpha)x) \,d\alpha\,d\mu_{V}(y)$ is of class $C^{p-2}$
on $\mathbb R$ (recall that $V\in C^p$ by assumption), therefore $(V')^2+2\beta F\in C^{p-2}(\R)$.
Since we assumed that $S$ does not vanish in  $[a,b]$, from \eqref{eq:S} we deduce that $S$ is of class
 $C^{p-3}$ on $[a,b]$. 
 
To solve \eqref{eq:tri} we apply
Tricomi's formula \cite[formula 12, p.181]{tricomi}
and we find that, for $x\in [a,b]$,
$$\beta f(x) {\sqrt{(x-a)(b-x)}}  d(x)= PV \int_{a}^{b} \frac{{\sqrt {(y-a)(b-y)}}}{{y-x}} (g(y) +c_g)dy + c_2 := h(x) $$
for some constant $c_2$, hence
$$
   \begin{array}{rcl}
    h(x) &=& \beta f(x){{(x-a)(b-x)}}  S(x)\\
     &=& PV \int_a^b \frac{{\sqrt {(y-a)(b-y)}}}{{y-x}} (g(y) +c_g)dy + c_2\\
     &=&PV \int_a^b {\sqrt {(y-a)(b-y)}} \frac{g(y) -g(x)}{y-x}dy + (g(x)+ c_g)PV \int_a^b \frac{{\sqrt {(y-a)(b-y)}}}{{y-x}} dy + c_2\\
     &=&  \int_a^b {\sqrt {(y-a)(b-y)}} \frac{g(y) -g(x)}{y-x}dy -\pi \left(x-\frac{a+b}{2} \right)(g(x)+ c_g) + c_2,
   \end{array}
$$
where we used  that, for $x\in [a,b]$,
$$
PV \int_a^b \frac{{\sqrt {(y-a)(b-y)}}}{{y-x}}dy = -\pi \Bigl(x -\frac{ a+b}{2}\Bigr).
$$ 
Set
$$ h_0 (x)=  \int_a^b {\sqrt {(y-a)(b-y)}}\, \frac{g(y) -g(x)}{y-x}dy.$$
Then  $h_0$ is of class $C^{k-1}$ (recall that $g$ is of class $C^k$). We next  choose $c_g$ and $c_2$ such that $h$ vanishes at $a$ and $b$ (notice that this choice uniquely identifies $c_g$).

We note  that $f \in C^{(k-2)\wedge (p-3)}([a,b])$.
Moreover, we can bound its derivatives 
in terms of the derivatives of $h_0, g$ and $S$: if we assume $j\le p-3$, we find that there exists a constant $C_j$, which depends only on the derivatives of $S$, such that
$$
\|f^{(j)}\|_{L^\infty([a,b])}\le C_j \max_{p\le j } \Bigl(\|h_0^{(p+1)}\|_{L^\infty([a,b])}+\|g^{(p+1)}\|_{L^\infty([a,b])}\Bigr)\le C_j \max_{p\le j+2} \|g^{(p)}\|_{L^\infty([a,b])}.
$$
Let us define $$k(x):= \beta\,PV\int \frac{f(y)}{x-y} d\mu_{V}(y)-g(x)-c_g\qquad \forall\,x \in \R.$$
By \eqref{eq:tri} we see that $k\equiv 0$ on $[a,b]$.
To ensure that $\Xi f=g+c_g$ also outside the support of $\mu_{V}$ we want
$$f(x)\biggl(\beta\,PV\int\frac{1}{x-y}\,d\mu_{V}(y)-V'(x)\biggr)=k(x)\qquad\forall\,x \in [a,b]^c.$$
Let us consider the function $\ell:\R\to\R$ defined as
\begin{equation}
\label{eq:ell}
\ell(x):= \beta\,PV\int\frac{1}{x-y}\,d\mu_V(y)-V'(x).
\end{equation}
Notice that, thanks to \eqref{eq:G}, $\ell(x)=\beta G(x) - V'(x)=-\beta\sqrt{ [V'(x)]^2+2\beta F(x)}$.
Hence, comparing this expression with \eqref{eq:S} and recalling that $S \geq \bar c>0$ in $[a,b]$, 
we deduce that $[V'(x)]^2+2\beta F(x)$ is smooth and  has simple zeroes both at $a$ and $b$, therefore $[V'(x)]^2+2\beta F(x)>0$ in
$[a-\eps,b+\eps]\backslash [a,b]$ for some $\eps>0$.

This shows that
$\ell$ does not vanish in $[a-\eps,b+\eps]\backslash [a,b]$.
Recalling that we can freely modify $V$ outside $[a-\eps,b+\eps]$ (see proof of Lemma \ref{propa}), we can actually assume that
$\ell$ vanishes
at $\{a,b\}$
and does not vanish in the whole $[a,b]^c$.

We claim that $\ell$ is H\"older $1/2$ at the boundary points, and in fact is equivalent to a square root there.
Indeed, it is immediate to check that $\ell$ is of class $C^{p-1}$ except possibly at the boundary points $\{a,b\}$. 
Moreover
\begin{align*}
PV\int\frac{1}{x-y}\,d\mu_{V}(y)&= S(a) \int_a^b\frac{1}{x-y} \sqrt{(y-a)(b-y)} \,dy\\
&+\int_a^b \frac{y-a}{x-y}\biggl(\int_0^1 S'(\alpha a+(1-\alpha) y) d\alpha\biggr) \sqrt{(y-a)(b-y)} \,dy.
\end{align*}
The first term can be computed exactly and we have, for some $c\neq 0$,
\begin{equation}\label{ipo}
\int_a^b\frac{1}{x-y} \sqrt{(y-a)(b-y)} \,dy =c (b-a)\biggl( \frac{x-\frac{a+b}{2}}{b-a} -\sqrt{ \Bigl(\frac{x-\frac{a+b}{2}}{b-a}\Bigr)^2-\frac{1}{4}}\biggr)\end{equation}
which is H\"older $1/2$, and in fact behaves as a square root at the boundary points. On the other hand, since $S$ is of class $C^{p-3}$ on $[a,b]$ with $p\ge 4$,  the second function is differentiable,  with derivative
at $a$ given by
$$\int_a^b \frac{1}{a-y}\biggl(\int_0^1 S'(\alpha a+(1-\alpha) y) d\alpha \biggr)\,\sqrt{(y-a)(b-y)} \,dy,$$
which is a convergent integral. The claim follows. 

Thus, for $x$ outside the support of $\mu_{V}$ we can set
$$f(x):=\ell(x)^{-1}k(x).$$ 
With this choice $\Xi f=g + c_g$ and $f$  is of class $C^{(k-2)\wedge (p-3)}$ on $\mathbb{R}\setminus\lbrace a,b \rbrace$.

We now want to show that $f$ is of class $C^{(k-2)\wedge (p-3)}$ on the whole $\R$.
For this we need to check the continuity of $f$ and its derivatives at the boundary points, say at $a$ (the case of $b$ being similar).
We take hereafter $r\le ( k-2)\wedge (p-3)$, so that $f$ has $r$ derivatives inside $[a,b]$ according to the above considerations. \\
Let us first deduce the continuity of $f$ at $a$.
We write, with $f(a^+)=\lim_{x\downarrow a} f(x)$,
$$k(x)=f(a^+) \ell(x)+ k_1(x)$$
with
$$
k_1(x):=\beta\left( PV\int  \frac{f(y)}{x-y} d\mu_{V}(y)- PV\int \frac{f(a^+)}{x-y} d\mu_V(y) \right) +g(x)+c_g+ f(a^+)V'_t(x).
$$
Notice that since $f=\ell^{-1}k$ outside $[a,b]$, if we can show that 
$\ell^{-1}(x)k_1(x)\to 0$ as $x\uparrow a$ then we would get $f(a^-)=f(a^+)$,
proving the desired continuity.

To prove it we first notice that $k_1$ vanishes at $a$  (since both $k$
and $\ell$ vanish inside $[a,b]$), hence
\begin{align*}
k_1(x)&= \beta\left( PV\int  \frac{f(y)-f(a^+) }{x-y} d\mu_{V}(y)- PV\int \frac{f(y)-f(a^+)}{a-y} d\mu_{V}(y) \right)  +\tilde g(x)-\tilde g(a)\\
&=\beta (a-x)\,PV\int \frac{f(y)-f(a^+)}{(x-y)(a-y)} d\mu_{V}(y) +\tilde g(x)-\tilde g(a),
\end{align*}
with $\tilde g:=g+f(a^+)V' \in C^1$.
Assume $1\le ( k-2)\wedge (p-3)$. Since $f$ is of class $C^1$ inside $[a,b]$
we have $|f(y)-f(a^+)|\leq C|y-a|$, from which we deduce that 
$|k_1(x)|\leq C|x-a|$ for $x \leq a$.

Hence $\ell^{-1}(x)k_1(x)\to 0$ as $x\uparrow a$
(recall that $\ell$ behaves as a square root near $a$),
which proves that $$\lim_{x\uparrow a} f(x)=\lim_{x\downarrow a}f(x)$$
and shows the continuity of $f$ at $a$.

We now consider the next derivative: we write
$$k(x)=\bigl[f(a)+f'(a^+) (x-a)\bigr] \ell(x)+ k_2(x)$$
with
\begin{align*}
k_2(x)&:= \beta (a-x)\,PV\int \frac{f(y)-f(a^+)-(y-a) f'(a^+)}{(x-y)(a-y)}\, d\mu_{V}(y)\\
& +\tilde g(x)-\tilde g(a)+f'(a^+) (x-a) V'_t(x).
\end{align*}
Since $k=\ell\equiv 0$ on $[a,b]$ we have $k_2(a)=k_2'(a^+)=k_2'(a^-)=0$.
Hence, since $f$ is of class $C^2$ on $[a,b]$, we see that $|k_2(x)|\leq C|x-a|^2$ for $x \leq a$, therefore  $k_2(x)/\ell(x)$ is of order $|x-a|^{3/2}$, thus
$$
f(x)=f(a)+f'(a^+) (x-a)+ O(|x-a|^{3/2})\qquad \text{for $x \leq a$,}
$$
which shows that $f$ has also a continuous derivative.

We obtain the continuity of the next derivatives similarly. Moreover, away from the boundary point the $j$-th derivative of $f$ outside
$[a,b]$ is of the same order than that of $g/ V'$, while near the boundary points it is governed by  the derivatives of $g$ nearby, therefore
\begin{equation}\label{toto2}
\|f^{(j)}\|_{{L^\infty([a,b]^c)}}\le 
 C'_j\max_{r\le j+2} \|g^{(r)}\|_{L^\infty(\mathbb R)}  \,.
\end{equation}
Finally, it is clear that $f$ behaves like $(g(x)+c_g)/V'(x)$ when $x$ goes to infinity.
\end{proof}
\def\La{{\mathcal L}}
\subsection{Defining the functions $ \yy_{0,t},\yy_{1,t},\zz_{t}$}
To define the functions $\yy_{0,t},\yy_{1,t},\zz_{t}$ according to \eqref{defpsi}, notice that Lemma \ref{propa} shows that
the hypothesis of Lemma \ref{central} are fulfilled.
Hence, as a consequence of Lemma \ref{central} we find the following result
(recall that $\psi\in C^{s,v}$ means that $\psi$ is $s$ times continuously differentiable with respect to the first variable
and $v$ times with respect to the second).
\begin{lem}\label{lem:regular} Let $r\ge 7$.
If $W, V' \in C^r$, we can choose $\yy_{0,t}$ of class $C^{r-2}$,  $\zz_t\in C^{s,v}$ for $s+v\le r-5,$ and $\yy_{1,t}\in C^{r-8}$. Moreover,
these functions (and their derivatives) go to zero at infinity like  $1/V'$
(and its corresponding derivatives).
\end{lem}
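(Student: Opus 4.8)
The plan is to deduce Lemma~\ref{lem:regular} directly from Lemma~\ref{central} by applying it three times, in the order dictated by the triangular structure of the system \eqref{defpsi}. First I would solve the equation for $\yy_{0,t}$: the right-hand side is $-W+c$, and since $W\in C^r$, Lemma~\ref{central} (applied with $g=-W$, $k=r$, and $p$ arbitrarily large — recall that by Lemma~\ref{propa} we may assume $V$ goes to infinity as fast as we want, so $V'\in C^r$ with $r$ as large as needed, hence the binding constraint is $k-2$) produces $\yy_{0,t}=\Xi^{-1}(-W)$ of class $C^{r-2}$, together with the estimate \eqref{toto2b} and the decay $\yy_{0,t}\sim (-W+c)/V'$ at infinity.

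Next I would solve for $\zz_t(\cdot,y)$. For each fixed $y$, the equation is $\Xi \zz_t(\cdot,y)[x]=-\tfrac{\beta}{2}\tfrac{\yy_{0,t}(x)-\yy_{0,t}(y)}{x-y}+c(y)$. The right-hand side, as a function of $x$, is a divided difference of $\yy_{0,t}$; since $\yy_{0,t}\in C^{r-2}$, this divided difference is of class $C^{r-3}$ in $x$ (one loses a derivative forming $\tfrac{h(x)-h(y)}{x-y}$ uniformly in $y$, via the integral representation $\int_0^1 \yy_{0,t}'(\alpha x+(1-\alpha)y)\,d\alpha$). Applying Lemma~\ref{central} in the $x$ variable gives $\zz_t(\cdot,y)\in C^{r-5}$, with bounds on the $C^{r-5}$-norm controlled by the $C^{r-3}$-norm of the right-hand side, uniformly in $y$. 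To get joint regularity $\zz_t\in C^{s,v}$ for $s+v\le r-5$, I would differentiate the equation $\Xi\zz_t(\cdot,y)[x]=g(x,y)$ in $y$: since $\Xi$ acts only in $x$, each $\partial_y^v$ yields $\Xi(\partial_y^v\zz_t)(\cdot,y)[x]=\partial_y^v g(x,y)+$ (a constant in $x$), and $\partial_y^v g$ is of class $C^{r-3-v}$ in $x$ (forming the $y$-derivative of the divided difference of $\yy_{0,t}$ costs $v$ of the available $r-3$ derivatives). Applying Lemma~\ref{central} again to this equation, $\partial_y^v\zz_t(\cdot,y)$ is of class $C^{r-5-v}$ in $x$; since the constant $c_g$ in Lemma~\ref{central} depends smoothly on parameters, one checks these pieces assemble into a genuine $C^{s,v}$ function with $s+v\le r-5$. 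This also requires tracking the decay: since $\yy_{0,t}\sim(-W+c)/V'$, the divided difference decays like $1/V'$ (or faster), so $\zz_t$ and its derivatives decay like $1/V'$ as well.

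Finally I would solve for $\yy_{1,t}$. The right-hand side is $-\bigl(\tfrac{\beta}{2}-1\bigr)\bigl[\yy_{0,t}'+\int\partial_1\zz_t(z,\cdot)\,d\mu_{V_t}(z)\bigr]+c'$. The first term $\yy_{0,t}'$ is of class $C^{r-3}$. For the second, $\partial_1\zz_t(z,w)$ loses one $z$-derivative relative to $\zz_t$, and integrating against the compactly supported measure $\mu_{V_t}$ in $z$ preserves the remaining regularity in the free variable $w$; since $\zz_t\in C^{s,v}$ with $s+v\le r-5$, taking $s=1$ forces the map $w\mapsto\int\partial_1\zz_t(z,w)\,d\mu_{V_t}(z)$ to be of class $C^{r-6}$ (here I am using $v\le r-6$). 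So the right-hand side is of class $C^{r-6}$, and Lemma~\ref{central} gives $\yy_{1,t}\in C^{r-8}$, again with the decay $\sim 1/V'$ inherited from that of $\yy_{0,t}'$ and of the $\zz_t$-integral. The constants $c,c',c(y)$ are exactly the ones produced by Lemma~\ref{central} at each step.

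The main obstacle is the second step: extracting joint $C^{s,v}$ regularity of $\zz_t$ from the family of one-dimensional equations $\Xi\zz_t(\cdot,y)[x]=g(x,y)$. Lemma~\ref{central} as stated only yields regularity in $x$ for fixed $y$; one must verify that the solution operator $\Xi^{-1}$, and in particular the normalizing constant $c_g$ (which is fixed by the requirement that $h$ vanish at $a$ and $b$), depends on the parameter $y$ with the same regularity as the data $g(x,y)$ does in $y$, and that this is compatible with differentiating the equation in $y$. This is essentially a matter of inspecting the explicit Tricomi-formula construction in the proof of Lemma~\ref{central} and checking that all operations there — the principal-value integrals, the boundary-value conditions, and the extension outside $[a,b]$ via division by $\ell$ — commute with $\partial_y$ and preserve the loss-of-two-derivatives bookkeeping; the bookkeeping $s+v\le r-5$ reflects precisely that forming the divided difference of $\yy_{0,t}\in C^{r-2}$ and then inverting $\Xi$ costs $1+2=3$ derivatives, distributed between the two variables. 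I would carry out this verification by reducing each $\partial_y$-differentiated equation to the hypotheses of Lemma~\ref{central} and invoking it as a black box, rather than redoing the Tricomi computation.
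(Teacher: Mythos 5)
Your proposal is correct and follows essentially the same route as the paper: solve the triangular system \eqref{defpsi} by three successive applications of Lemma \ref{central}, with the same derivative bookkeeping ($\yy_{0,t}\in C^{r-2}$, $\zz_t\in C^{s,v}$ for $s+v\le r-5$, $\yy_{1,t}\in C^{r-8}$) and the decay inherited from that lemma. The only cosmetic difference is at the joint-regularity step for $\zz_t$: the paper writes the explicit formula $\zz_t(x,y)=-\tfrac{\beta}{2}\int_0^1 \Xi^{-1}\bigl[\yy_{0,t}'(\alpha\,\cdot+(1-\alpha)y)\bigr](x)\,d\alpha$ and exploits that $y$ enters only as a translation, which sidesteps the issue you flag about the $y$-dependence of the normalizing constants $c_\alpha(y)$, whereas you differentiate the equation in $y$ and re-invoke Lemma \ref{central} --- an equivalent verification.
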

\begin{proof} By Lemma \ref{central} we have $\yy_{0,t}=\Xi^{-1} W\in C^{r-2}$. 
For $\zz_{t}$, we can rewrite
\begin{align*}
\Xi \zz_t(\cdot, y) [x] &=-\frac{\beta}{2}\int_0^1  \yy_{0,t}'(\alpha x +(1-\alpha)y )\,d\alpha +c(y)\\
&= -\frac{\beta}{2}\int_0^1  [\yy_{0,t}'(\alpha x +(1-\alpha)y )+c_\alpha(y)]\,d\alpha
\end{align*}
where we choose $c_\alpha(y)$ to be the unique constant provided by Lemma \ref{central} which ensures that $\Xi^{-1} [\yy_{0,t}'(\alpha x +(1-\alpha)y )+c_\alpha(y)]$ is smooth.
This gives that $c(y)=\int_0^1 c_\alpha(y) d\alpha$. Since $\Xi^{-1}$ is a linear integral operator,
we have
$$\zz_t(x,y)=
-\frac{\beta}{2}\int_0^1 \Xi^{-1}[ \yy_{0,t}'(\alpha \cdot +(1-\alpha)y )](x)\, d\alpha\,.$$
As the variable $y$ is only a translation, it is not difficult to check that $\zz_t\in C^{s,v}$  for any $s+v\le r-5$. It follows that 
$$-\Bigl(\frac{\beta}{2}-1\Bigr)\bigg[\yy_{0,t}' + \int \partial_{1}\zz_{t}(z,\cdot )\,d\mu_{V_t}(z)\biggr]+c'$$
is of class $C^{r-6}$ and therefore by Lemma \ref{central} we can choose $\yy_{1,t} \in C^{r-8}$, as desired.

The decay at infinity is finally again a consequence of Lemma \ref{central}.
 \end{proof}

\subsection{Getting rid of the random error term $E_N$}
\label{sect:error}

We show that   the $L^1_{\mathbb P_{V_t}^N}$-norm of  the error term $E_N$ defined in \eqref{eq:EN} goes
to zero. This could be easily derived from \cite{BEY1}, but we here provide a self-contained proof.
To this end, we first make some general consideration
on the growth of variances.

Following e.g. \cite[Theorem 1.6]{Edouard-mylene}, up to assume that $V_t$ goes sufficiently fast  at infinity (which we did, see Lemma \ref{propa}),  it is easy to show that
  there exists a constant $\tau_0>0$ so that for all $\tau \ge \tau_0$,
$$
\mathbb P^N_{V_t}\biggl( D( L_N,\mu_{V_t})\ge \tau\sqrt{\frac{\log N}{N}} \biggr)\le e^{-c \tau^2 N\log N }\,,
$$
where $D$ is the $1$-Wasserstein distance
$$
D(\mu,\nu):=\sup_{\|f'\|_{\infty}\le 1} \biggl|\int f(d\mu-d\nu)\biggr|.
$$ 
Since $M_N=N(L_N-\mu_{V_t})$ we get
\begin{equation}
\label{eq:W M}
D( L_N,\mu_{V_t})=\frac{1}{N} \sup_{\|f'\|_{\infty}\le 1} \biggl|\int f\,dM_N\biggr|,
\end{equation}
hence for $\tau\ge \tau_0$
\begin{equation}\label{conc}
\mathbb P^N_{V_t}\biggl( \sup_{\|f\|_{\rm Lip}\le 1} \biggl|\int f\,dM_N\biggr| \ge \tau\sqrt{N \log N} \biggr)\le e^{-c \tau^2 N\log N }.\end{equation}
This already shows that, if $f$ is sufficiently smooth, $\int f(x,y) dM_N(x)\,dM_N(y)$ is of order at most $N\log N$. More precisely
$$\int f(x,y)\, dM_N(x)\,dM_N(y)=\int \hat f(\zeta,\xi)\biggl( \int e^{i\zeta x} dM_N(x) \int e^{i\xi x} dM_N(x)\biggr)\, d\xi\, d\zeta,$$
so that with probability greater than $1-e^{-c\tau_0^2N \log N}$ we have
\begin{equation}\label{bor1}
\left|\int f(x,y) \,dM_N(x)\,dM_N(y)\right|\le \tau_0^2\, N\log N \,\int | \hat f(\zeta,\xi)|\, |\zeta|\,|\xi|\,d\zeta\, d\xi\,.\end{equation}

To improve this estimate, we shall use loop equations as well as Lemma
\ref{central}. Given a function $g$ and a measure $\nu$, we use the notation $\nu(g):=\int g\,d\nu$.
\begin{lem}\label{conch} Let $g$ be a smooth function.
Then, if $\tilde M_N=NL_N-N\mathbb E_{V_t}[ L_N]$, 
there exists a finite constant $C$ such that
\begin{align*}
\sigma^{(1)}_N(g)&:=\biggl| \int M_N(g) \,d \mathbb P^N_{V_t}\biggr|\le C \,m(g)
=:B^1_N(g)\\
\sigma^{(2)}_N(g)&:= \int \left(\tilde M_N(g)\right)^2  d \mathbb P^N_{V_t}\le C\Bigl(m(g)^2 + m(g)\|g\|_\infty +\|\Xi^{-1} g\|_\infty \|g'\|_{\infty}\Bigr)=:B^2_N(g)
 \\
\sigma^{(4)}_N(g)&:=\int \left(\tilde M_N(g)\right)^4 \, d \mathbb P^N_{V_t}\\
&\le  C \Bigl( \|\Xi^{-1} g\|_\infty \|g'\|_{\infty} \sigma_N^{(2)}(g)+ \|g\|_\infty^3m(g)
+m(g)^2 \sigma_N^{(2)}(g)+m(g)^4\Bigr) 
=:B^4_N(g),
\end{align*}
where 
\begin{align*}
m(g)&:=\Bigl|1-\frac{\beta}{2}\Bigr|\|(\Xi^{-1} g)'\|_\infty + \frac{\beta}{2}
  \log N \int  |\hat \Xi^{-1} g|(\xi) \,|\xi|^3 \,d\xi.
\end{align*}
\end{lem}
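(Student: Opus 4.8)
The plan is to obtain all three estimates from the Dyson--Schwinger (loop) equations associated with $\mathbb P^N_{V_t}$, using Lemma \ref{central} to choose the test function and the concentration bound \eqref{conc} to control the quadratic fluctuation term that appears.

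First I would record the loop equation in a form adapted to our needs. Integrating by parts in $\lambda_i$ the quantity $\partial_i\big(G\,f(\lambda_i)\,\rho_t\big)$ and summing over $i$, one gets for every smooth $G:\R^N\to\R$ and every smooth $f:\R\to\R$ with sufficient decay
\[
\mathbb E\Big[\sum_i\partial_iG\, f(\lambda_i)\Big]+\mathbb E\Big[G\Big(\big(1-\tfrac\beta2\big)NL_N(f')-N\,M_N(\Xi f)+\tfrac\beta2\iint\tfrac{f(x)-f(y)}{x-y}\,dM_N(x)\,dM_N(y)\Big)\Big]=0,
\]
where the passage from $L_N\otimes L_N$ to $M_N\otimes M_N$ in the quadratic term uses \eqref{bn} exactly as in the derivation of \eqref{defpsi}. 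Lemma \ref{propa} guarantees the hypotheses of Lemma \ref{central}, so we may take $f=\Xi^{-1}g$; then $M_N(\Xi f)=M_N(g+c_g)=M_N(g)$, and the ``main'' linear term becomes precisely $N\,M_N(g)$. Choosing $G\equiv1$ isolates $\mathbb E[M_N(g)]$ and yields the bound on $\sigma_N^{(1)}$; choosing $G=\tilde M_N(g)$ (so that $\partial_iG=g'(\lambda_i)$), the term $N\,M_N(\Xi f)\,\tilde M_N(g)$ contributes $N\sigma_N^{(2)}(g)$ in expectation (the cross term vanishing since $\mathbb E[\tilde M_N(g)]=0$); choosing $G=(\tilde M_N(g))^3$ isolates $N\sigma_N^{(4)}(g)$ in the same way.

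It then remains to bound the error terms so produced. The deterministic part of $\big(1-\tfrac\beta2\big)NL_N(f')$ equals $\big(1-\tfrac\beta2\big)N\mu_{V_t}(f')$ and is $\le N\,|1-\tfrac\beta2|\,\|(\Xi^{-1}g)'\|_\infty$, which after dividing the loop equation by $N$ produces the first term of $m(g)$; its fluctuating part $M_N((\Xi^{-1}g)')$ is of lower order. The decisive term $\iint\tfrac{f(x)-f(y)}{x-y}dM_N\,dM_N$ is handled by writing $\tfrac{f(x)-f(y)}{x-y}=\int_0^1 f'(\alpha x+(1-\alpha)y)\,d\alpha$, Fourier expanding $f'$, and invoking \eqref{conc}: on an event of probability $\ge1-e^{-c\tau_0^2N\log N}$ it is bounded by $C\,N\log N\int|\hat\Xi^{-1}g|(\xi)\,|\xi|^3\,d\xi$, producing the remaining part of $m(g)$; on the complementary event the a priori confinement of the eigenvalues (Hypotheses \ref{hypo1}--\ref{hypo2} together with the large deviation estimates quoted in Lemma \ref{propa}) gives a crude polynomial bound that is annihilated by $e^{-cN\log N}$. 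The terms $\sum_i\partial_iG\,f(\lambda_i)$ equal $N L_N(g'\,\Xi^{-1}g)$ times a lower power of $\tilde M_N(g)$ and generate the $\|\Xi^{-1}g\|_\infty\|g'\|_\infty$ factors (using $\mu_{V_t}(g'\Xi^{-1}g)\le\|g'\|_\infty\|\Xi^{-1}g\|_\infty$); finally, the products of $\tilde M_N(g)$ (or its powers) with the error terms above are estimated by Cauchy--Schwarz / Hölder, which reintroduces $\sqrt{\sigma_N^{(2)}(g)}$ (resp.\ $(\sigma_N^{(4)}(g))^{3/4}$) multiplied by a factor of order $N\,m(g)$; these are absorbed into the left-hand side by Young's inequality, yielding $B_N^1$, $B_N^2$ and $B_N^4$.

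The main obstacle is exactly this self-referential structure: the right-hand sides involve the very quantities $\sigma_N^{(2)},\sigma_N^{(4)}$ being estimated, as well as covariances such as $\mathrm{Cov}\big(M_N(g),M_N((\Xi^{-1}g)')\big)$ built from auxiliary functions. To close the argument I would (i) first extract from \eqref{conc} a crude a priori bound $\sigma_N^{(2)}(h)\le C\,\|h'\|_\infty^2\,N\log N$ for the auxiliary functions $h$, (ii) use the quantitative estimate \eqref{toto2b} of Lemma \ref{central} to check that each application of $\Xi^{-1}$ and of differentiation costs only a fixed finite number of derivatives, so that the whole scheme stays within the $C^{30}$ regularity budget, and (iii) tune the weights in Young's inequality small enough to absorb the $\sqrt{\sigma_N^{(2)}}\,N m(g)$ and $(\sigma_N^{(4)})^{3/4}N m(g)$ contributions into the left-hand side. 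The rest is routine, if lengthy, bookkeeping of Fourier weights and of the exponentially small tail contributions.
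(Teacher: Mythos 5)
Your proposal follows essentially the same route as the paper: the loop-equation hierarchy (your insertions $G=1$, $\tilde M_N(g)$, $(\tilde M_N(g))^3$ are equivalent, after integration by parts, to the paper's differentiation of the first loop equation with respect to perturbations $V\mapsto V+\delta g$), the choice $f=\Xi^{-1}g$ via Lemma \ref{central}, the Fourier expansion of $\iint\frac{f(x)-f(y)}{x-y}\,dM_N\,dM_N$ combined with the concentration bound \eqref{conc} to produce $m(g)$, crude bounds killed by $e^{-cN\log N}$ off the good event, and H\"older plus absorption (the paper solves the resulting quadratic inequality for $\sigma_N^{(2)}$ and absorbs the $m(g)\,\sigma_N^{(4)\,3/4}$ term, which is also where your not-quite-vanishing cross term $\mathbb E[M_N(g)]\,\mathbb E[(\tilde M_N(g))^3]$ lands). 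The self-referential covariances you worry about are not needed: as in the paper one simply bounds $|L_N((\Xi^{-1}g)')|\le\|(\Xi^{-1}g)'\|_\infty$, so no a priori bound on auxiliary functions is required.
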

\begin{proof}
 First observe that, by integration by parts, for any $C^1$ function $f$
 \begin{equation}\label{loop1}
 \int \biggl(N\sum_i V'(\lambda_i)f(\lambda_i)- \beta \sum_{i<j}\frac{f(\lambda_i)-f(\lambda_j)}{\lambda_i-\lambda_j} \biggr)\,d\mathbb P^N_{V_t}=\int \sum_{i} f'(\lambda_i) \,d\mathbb P^N_{V_t}\end{equation}
 which we can rewrite as the first loop equation
 \begin{equation}
 \label{lkjh}
 \int   M_N(\Xi f)\,d \mathbb P^N_{V_t}=\int \biggl[ \Bigl(1-\frac{\beta}{2}\Bigr)\int f' dL_N + \frac{\beta}{2 N} \int \frac{f(x)-f(y)}{x-y} d M_N(x) d M_N(y)\biggr]\,d\mathbb P^N_{V_t}\,.\end{equation}
We denote 
 $$F_N(g):=\Bigl(1-\frac{\beta}{2}\Bigr)\int (\Xi^{-1} g)' \,dL_N + \frac{\beta}{2 N} \int \frac{\Xi^{-1} g(x)-\Xi^{-1} g(y)}{x-y} \,d M_N(x)\, d M_N(y)$$
 so that taking $f:=\Xi^{-1} g$  in \eqref{lkjh} we deduce
$$\int   M_N(g)\,d \mathbb P^N_{V_t}=\int F_N(g)\, d \mathbb P^N_{V_t}.$$
To bound the right hand side above, we notice that 
$\Xi^{-1}g$ goes to zero at infinity like $1/V'$ (see Lemma \ref{central}). Hence we can write its Fourier transform and get
 \begin{multline*}
 \int \frac{\Xi^{-1} g(x)-\Xi^{-1} g(y)}{x-y} \,d M_N(x)\, d M_N(y)\\
 = i\int  d\xi\, \xi  \,\hat \Xi^{-1}g(\xi)\int_0^1  d\alpha \int e^{i\alpha\xi x}\,dM_N(x)
 \int e^{i(1-\alpha)\xi y}\,dM_N(y),
 \end{multline*}
 so that we deduce (recall \eqref{eq:W M})
 $$\sup_{D(L_N,\mu_{V_t})\le \tau_0 \sqrt{\log N/N}} F_N(g)\le (1+ \tau_0^2)\, m(g).$$
 On the other hand, as the mass of $M_N$ is always bounded by $2N$, we deduce that 
  $F_N(g)$ is bounded everywhere  by $ N  m(g) 
 $. Since the set  $\{D( L_N,\mu_{V_t})\ge \tau_0 \sqrt{N\log N}\}$
 has small probability (see \eqref{conc}), we conclude that
 \begin{equation}\label{bor2}
\biggl|\int M_N(g) \,d \mathbb P^N_{V_t}\biggr|\le  
N e^{-c\tau_0^2  N\log N }m(g) + (1+\tau_0^2) m(g) \leq C\,m(g),\end{equation}
which proves our first bound.

Before proving the next estimates, let us make a simple remark: using the definition of $M_N$
and $\tilde M_N$ it is easy to check that, for any function $g$,
\begin{equation}
\label{eq:difference MN tilde}
\left|M_N(g) - \tilde M_N(g)\right| =\biggl|\int M_N(g) \,d \mathbb P^N_{V_t}\biggr|.
\end{equation}

To get estimates on the covariance we obtain the second loop equation by changing $V(x)$ into $V(x)+\delta \,g(x)$ in \eqref{loop1} and differentiating with respect to $\delta$ at $\delta=0$.
This gives 
\begin{equation}
\label{lkjp}
\begin{split}
&\int M_N(\Xi f) \tilde M_N(g) \,d \mathbb P^N_{V_t}=\int L_N(f g')\,d \mathbb P^N_{V_t} \\
&+\int \biggl[ \Bigl(1-\frac{\beta}{2}\Bigr)\int f' dL_N + \frac{\beta}{2 N} \int \frac{f(x)-f(y)}{x-y} d M_N(x) d M_N(y)\biggr]\tilde M_N(g)\,
 d\mathbb P^N_{V_t}.
 \end{split}
 \end{equation}
 We now notice that $M_N(\Xi f)-\tilde M_N(\Xi f)$ is deterministic and $\int \tilde M_N(g) \,d \mathbb P^N_{V_t}=0$,
 hence the left hand side is equal to
 $$
 \int \tilde M_N(\Xi f) \tilde M_N(g) \,d \mathbb P^N_{V_t}.
 $$
We take $f:=\Xi^{-1}g$
and we argue similarly to above (that is, splitting the estimate depending whether 
$D( L_N,\mu_{V_t}) \geq \tau_0\sqrt{N\log N}$ or not,
and use that $|\tilde M_N(g)|\leq N\|g\|_\infty$)
  to deduce that $\sigma_N^{(2)}(g):=\int |\tilde M_N( f) |^2 d \mathbb P^N_{V_t}$ satisfies
\begin{equation}
 \label{bo1}
 \begin{split}
\sigma_N^{(2)}(g)&\le \|g'\Xi^{-1} g\|_\infty  +\int |F_N(g)| |\tilde M_N(g)| d\mathbb P^N_{V_t}\\
&\le \|\Xi^{-1} g\|_\infty\,\|g'\|_{\infty} + N^2 e^{-c\tau_0^2 N\log N}\|g\|_\infty m(g)+
C\,m(g)\int |\tilde M_N(g)| d\mathbb P^N_{V_t}\\
& = \|\Xi^{-1} g\|_\infty \|g'\|_{\infty} +N^2 e^{-c\tau_0^2 N\log N }m(g)\|g\|_\infty + C\,m(g)\sigma_N^{(2)}(g)^{1/2}.
\end{split}
\end{equation}
Solving this quadratic inequality yields
$$\sigma_N^{(2)}(g)\le C \Bigl[ m(g)^2 + m(g)\|g\|_\infty +\|\Xi^{-1} g\|_\infty \|g'\|_{\infty} \Bigr]
$$
for some finite constant $C$.

We finally turn to the fourth moment.
If we make an infinitesimal change of potential $V(x)$ into $V(x)+\delta_1\, g_2(x) +\delta_2\, g_3(x)$ and differentiate at $\delta_1=\delta_2=0$
 into \eqref{lkjp}
we get, denoting $g=g_1$, 
\begin{equation}
\label{lkj2}
\begin{split}
&\int M_N(\Xi f) \tilde M_N(g_1) \tilde M_N(g_2) \tilde M_N(g_3)\,d \mathbb P^N_{V_t}=\int \biggl[\sum_\sigma L_N(f g_{\sigma(1)}')\tilde M_N(g_{\sigma(2)}) \tilde M_N(g_{\sigma(3)}) \biggr]\,d\mathbb P^N_{V_t}+\\
&\int \biggl[ \Bigl(1-\frac{\beta}{2}\Bigr)\int f' \,dL_N + \frac{\beta}{2 N} \int \frac{f(x)-f(y)}{x-y}\, d M_N(x)\, d M_N(y)\biggr] M_N(g_1)\tilde M_N(g_2)\tilde M_N(g_3)
 \,d\mathbb P^N_{V_t} ,
\end{split}
\end{equation}
where we sum over the permutation $\sigma$ of $\{1,2,3\}$. Taking $\Xi f=g_1=g_2=g_3=g$,
by \eqref{eq:difference MN tilde}, \eqref{bor2}, and Cauchy-Schwarz inequality we get 
$$\sigma_{N}^{(4)}(g)\le C\Bigl[     \|g'\Xi^{-1} g\|_\infty \sigma_N^{(2)}(g) + \|g\|_\infty^3m(g)+m(g) \sigma_N^{(4)}(g)^{3/4}+ m(g)^2\sigma_N^{(2)}(g)\Bigr],
$$
which implies
$$\sigma_{N}^{(4)}(g)\le C\Bigl[   \|g'\Xi^{-1} g \|_\infty \sigma_N^{(2)}(g)+ \|g\|_\infty^3m(g)
+m(g)^2 \sigma_N^{(2)}(g)+m(g)^4\Bigr].$$
\end{proof}

Applying the above result with $g=e^{i\lambda \cdot}$ we get the following:
\begin{cor}
Assume that $V',W \in C^r$ with $r\ge 8$. Then  there exists a finite constant $C$ such that, for all $\lambda\in \mathbb R$,
\begin{align}
\int |M_N(e^{i\lambda\cdot})|^2  d \mathbb P^N_{V_t}&\le  C[\log N(1+|\lambda|^{7})]^2\,,\label{conc3b}\\
\int |M_N(e^{i\lambda\cdot})|^4  d \mathbb P^N_{V_t}&\le  C[\log N(1+|\lambda|^{7})]^4\,.\label{conc3bb}
\end{align}

\end{cor}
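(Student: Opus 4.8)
The plan is to obtain both inequalities directly from Lemma~\ref{conch}. Since that lemma is stated for real functions, I would apply it to $g(x)=\cos(\lambda x)$ and to $g(x)=\sin(\lambda x)$ separately, and then recombine via $|M_N(e^{i\lambda\cdot})|^2=M_N(\cos(\lambda\cdot))^2+M_N(\sin(\lambda\cdot))^2$ (together with the analogous expansion for the fourth power). Everything then reduces to two tasks: first, evaluating the quantities $m(g)$, $\|g\|_\infty$, $\|g'\|_\infty$, $\|\Xi^{-1}g\|_\infty$ that enter the bounds $B^1_N,B^2_N,B^4_N$ of Lemma~\ref{conch} when $g$ is such a trigonometric function; second, converting the bounds on $\tilde M_N(g)$ into bounds on $M_N(g)$.

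For the first task, both choices of $g$ satisfy $\|g\|_\infty\le 1$ and $\|g^{(j)}\|_\infty\le|\lambda|^j$, whence $\|g\|_{C^j}\le C_j(1+|\lambda|^j)$; in particular $\|\Xi^{-1}g\|_\infty\le C\|g\|_{C^2}\le C(1+|\lambda|^2)$. The nontrivial point is to bound $m(g)$, and in particular its term $\log N\int|\widehat{\Xi^{-1}g}|(\xi)\,|\xi|^3\,d\xi$. Since $g\in C^\infty$ and $V\in C^{r+1}$ (because $V'\in C^r$, $r\ge 8$), Lemma~\ref{central} gives $\Xi^{-1}g\in C^{r-2}$ with $\|(\Xi^{-1}g)^{(j)}\|_\infty\le C_j\|g\|_{C^{j+2}}\le C_j(1+|\lambda|^{j+2})$ for $j\le r-2$, and moreover $\Xi^{-1}g$ and its derivatives decay at infinity like $1/V'$ and its derivatives, which are integrable since we arranged $V$ to grow as fast as needed (Lemma~\ref{propa}); hence each $(\Xi^{-1}g)^{(j)}$, $j\le r-2$, lies in $L^1\cap L^\infty$ with controlled norm. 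A Cauchy--Schwarz argument (bounding $\int|\hat h|(\xi)|\xi|^3\,d\xi$ by a suitable Sobolev norm of $h=\Xi^{-1}g$, which requires controlling at most about six derivatives of $h$, hence $r\ge 8$) then yields
$$
\int|\widehat{\Xi^{-1}g}|(\xi)\,|\xi|^3\,d\xi\le C(1+|\lambda|^{7}),
$$
so that $m(g)\le C\log N\,(1+|\lambda|^{7})$, the Fourier term dominating $\|(\Xi^{-1}g)'\|_\infty\le C(1+|\lambda|^3)$.

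For the second task, I would plug $\|g\|_\infty\le1$, $\|g'\|_\infty=|\lambda|$, $\|\Xi^{-1}g\|_\infty\le C(1+|\lambda|^2)$ and $m(g)\le C\log N(1+|\lambda|^7)$ into the formulas for $B^2_N(g)$ and $B^4_N(g)$: comparing powers of $1+|\lambda|$ one sees that every summand of $B^2_N(g)$ is $\le C[\log N(1+|\lambda|^7)]^2$ (with $m(g)^2$ dominant), and then, using $\sigma^{(2)}_N(g)\le B^2_N(g)$, that every summand of $B^4_N(g)$ is $\le C[\log N(1+|\lambda|^7)]^4$ (with $m(g)^4$ dominant; for instance $\|\Xi^{-1}g\|_\infty\|g'\|_\infty\,\sigma^{(2)}_N(g)\le C(1+|\lambda|^3)[\log N(1+|\lambda|^7)]^2$ is of lower order). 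To pass from $\tilde M_N$ to $M_N$, I would use \eqref{eq:difference MN tilde}: the difference $M_N(g)-\tilde M_N(g)$ is the deterministic number $\int M_N(g)\,d\mathbb P^N_{V_t}$, bounded by $B^1_N(g)=Cm(g)$, so $|M_N(g)|^2\le 2|\tilde M_N(g)|^2+2B^1_N(g)^2$ and $|M_N(g)|^4\le 8|\tilde M_N(g)|^4+8B^1_N(g)^4$; integrating against $\mathbb P^N_{V_t}$ and using $B^1_N(g)^2\le C\,B^2_N(g)$, $B^1_N(g)^4\le C\,B^4_N(g)$ gives $\int|M_N(g)|^2\,d\mathbb P^N_{V_t}\le C[\log N(1+|\lambda|^7)]^2$ and $\int|M_N(g)|^4\,d\mathbb P^N_{V_t}\le C[\log N(1+|\lambda|^7)]^4$. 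Recombining $g=\cos(\lambda\cdot)$ and $g=\sin(\lambda\cdot)$ then yields \eqref{conc3b} and \eqref{conc3bb}.

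The main obstacle is really the single estimate on $\int|\widehat{\Xi^{-1}g}|(\xi)|\xi|^3\,d\xi$: it is here that one needs both the dimension-independent regularity estimates of Lemma~\ref{central} and the decay of $\Xi^{-1}g$ at infinity (to upgrade $C^k$ bounds to $H^k$ bounds), and this is what forces the hypothesis $r\ge 8$. All the remaining steps are routine tracking of the exponents of $1+|\lambda|$ and of the powers of $\log N$.
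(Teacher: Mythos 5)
Your proposal is correct and follows essentially the same route as the paper: apply Lemma \ref{conch} to trigonometric test functions, control $m(g)$, $\|\Xi^{-1}g\|_\infty$, $\|g'\|_\infty$ via Lemma \ref{central} together with the decay of $\Xi^{-1}g$ at infinity, deduce the bounds $B^1_N,B^2_N,B^4_N$, and pass from $\tilde M_N$ to $M_N$ using \eqref{eq:difference MN tilde} and \eqref{bor2}. The only (harmless) deviations are cosmetic: the paper applies the lemma directly to $e^{i\lambda\cdot}$ rather than splitting into $\cos$ and $\sin$, and it bounds $\int|\widehat{\Xi^{-1}g}|(\xi)|\xi|^3\,d\xi$ by the pointwise decay $|\widehat{\Xi^{-1}g}|(\xi)\le C\|g\|_{C^{j+2}}/(1+|\xi|^j)$ with $j=5$ instead of your Cauchy--Schwarz/Sobolev argument.
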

\begin{proof}
In the case $g(x)=e^{i\lambda x}$ we estimate the norms of $\Xi^{-1}g$ by using Lemma \ref{central},
and we get a finite constant $C$ such that 
$$\|\Xi^{-1} g\|_\infty\le C |\lambda|^2,\qquad \|\Xi^{-1}g'\|_\infty\le C |\lambda|^3,$$
whereas, since $\Xi^{-1}g$ goes fast to zero at infinity (as $1/V'$), for $j \leq r-3$ we have 
(see Lemma \ref{central})
$$|\hat\Xi^{-1} g|(\xi)\le C\frac{\| \Xi^{-1} g\|_{C^j}}{1+|\xi|^j}\le  C'\frac{\| g\|_{C^{j+2}}}{1+|\xi|^j}\le C'\frac{1+ |\lambda|^{j+2}}{1+|\xi|^j}.$$
Hence, we deduce that there exists a finite constant $C'$ such that
\begin{align*}
m(g)&\le C\log N \biggl( |\lambda|^3+1+ \int d\xi \,\frac{1+|\lambda|^{7}}{1+ |\xi|^{5}}|\xi|^3 \biggr)=C'\log N\left(1+|\lambda|^7\right),\\
B_N^1(g)&\le C'\log N \left(1+|\lambda|^7\right),\\
B_N^2(g)&\le C' (\log N)^2 \left(1+|\lambda|^7\right)^2,\\
B_N^4(g)&\le C'(\log N)^4\left(1+|\lambda|^7\right)^4.\\
\end{align*}
Finally, for $k=2,4$, using \eqref{eq:difference MN tilde} and \eqref{bor2} we have
$$\int |M_N(e^{i\lambda\cdot})|^k \, d \mathbb P^N_{V_t}\le 2^{k-1}\left( \int |\tilde M_N(e^{i\lambda\cdot})|^k \,d \mathbb P^N_{V_t}
+\bigl(B_N^1(g)\bigr)^k\right)$$
from which the result follows.

\end{proof}
We can now estimate $E_N$. 

The linear term can be handled in the same way as we shall do now for the quadratic and cubic
terms (which are actually more delicate), so we just focus on them.

We have two quadratic terms in $M_N$  which sum up into 
$$E^1_N=-\frac{1}{N}\Bigl(1-\frac\beta2\Bigr)\iint \partial_1\zz_t(x,y)\,dM_N(x)\,dM_N(y)
-\frac{\beta}{2N}\iint \frac{\yy_{1,t}(x)-\yy_{1,t}(y)}{x-y} \,dM_N(x)\,dM_N(y).
$$
Writing
$$
\frac{\yy_{1,t}(x)-\yy_{1,t}(y)}{x-y}=\int_0^1 \yy_{1,t}'(\alpha x+(1-\alpha)y)\,d\alpha 
=\int_0^1\biggl( \int \widehat{\yy_{1,t}'}(\xi) e^{i(\alpha x+(1-\alpha)y)\xi}d\xi\biggr)\,d\alpha 
$$
we see that
$$
\iint \frac{\yy_{1,t}(x)-\yy_{1,t}(y)}{x-y} \,dM_N(x)\,dM_N(y)=\int d\xi\, \widehat{\yy_{1,t}'}(\xi)\int_0^1d\alpha \,M_N(e^{i\alpha\xi \cdot})M_N(e^{i(1-\alpha)\xi \cdot}),
$$
so using \eqref{conc3b} we get
\begin{multline*}
\int |E^1_N|  \,d \mathbb P^N_{V_t}\le  C\,\frac{(\log N)^2 }{N}\bigg(\int d\xi\, |\hat \yy_{1,t}|(\xi)\, |\xi|\, \bigl(1+|\xi|^7\bigr)^2\\
+ \iint d\xi \,d\zeta\, |\hat\zz_{t}| (\xi,\zeta) \,|\xi|\left(1+|\xi|^7\right) \left(1+|\zeta|^7\right) \bigg).
\end{multline*}
It is easy to see that the right hand side is finite if $\yy_{1,t}$ and $\zz_{t}$ are smooth enough
(recall that these functions and their derivatives decay fast at infinity). More precisely, to ensure that
$$
|\hat \yy_{1,t}|(\xi)\, |\xi|\, \bigl(1+|\xi|^7\bigr)^2 \leq \frac{C}{1+|\xi|^2} \in L^1(\R)
$$
and
$$
|\hat\zz_{t}| (\xi,\zeta) \,|\xi|\left(1+|\xi|^7\right) \left(1+|\zeta|^7\right) \leq \frac{C}{1+|\xi|^3+|\zeta|^3}\in L^1(\R^2),
$$
we need  $\yy_{1,t}\in C^{17}$ and 
$\zz_{t}\in C^{11, 7}\cap C^{8,10}$, so (recalling Lemma \ref{lem:regular}) $V',W\in C^{25}$ is  enough to guarantee that the right hand side is finite.

Using \eqref{conc3b}, \eqref{conc3bb}, and H\"older inequality, we can similarly bound the expectation of the  cubic term
\begin{align*}
E^2_N&=\frac{\beta}{2N}\iiint\frac{\zz_t(x,y)-\zz_t(\tilde x,y)}{x-\tilde x}\,dM_N(x)\,dM_N(y)\,dM_N(\tilde x)\\
&=i\frac{\beta}{2N}\iint d\xi \,d\zeta \,\widehat{\partial_1\zz_{t}}(\xi,\zeta)\int_0^1 d\alpha \, M_N(e^{i\alpha\xi\cdot})M_N(e^{i(1-\alpha)\xi\cdot})M_N(e^{i\zeta\cdot})
\end{align*}
to get
$$ \int |E^2_N|  \,d \mathbb P^N_{V_t}\le  C\,\frac{(\log N)^3 }{{N}}\iint d\xi\, d\zeta\,  |\hat\zz_{t}(\xi,\zeta)| \,|\xi|\left(1+|\xi|^7\right)^2 
\left(1+|\zeta|^7\right).$$
Again the right hand side is finite if $\zz_{t} \in C^{18, 7}\cap C^{15,10}$, which
 is ensured by Lemma \ref{lem:regular} if $V',W$ are of class $C^{30}$.

\subsection{Control on the deterministic term  $C^N_t$}
By what we proved above we have
$$
\int| {\mathcal R}^N_t - C^N_t|\,d{\mathbb P^N_{V_t}}\leq C\,\frac{(\log N)^3 }{{N}},
$$
thus, in particular,
$$
\bigl|C^N_t- \mathbb E[{\mathcal R}^N_t]\bigr|\leq C\,\frac{(\log N)^3 }{{N}}.
$$
Notice now that, by construction,
$${\mathcal R}^N_t =-\La \YY^N_t +N\sum_i W(\lambda_i)+c^N_t$$
with $c^N_t=-\mathbb E[ N\sum_i W(\lambda_i)]$ and
$$\La \YY:={\rm div}\YY+\beta\sum_{i<j}\frac{\YY^i-\YY^j}{\lambda_i-\lambda_j}-N\sum_i V'(\lambda_i) \YY^i,$$
and an integration by parts shows that, under $P_{V}^N$, $\mathbb E[\La \YY]=0$ for any vector field $\YY$.
This implies that $\mathbb E[{\mathcal R}^N_t]=0$, therefore
$|C^N_t|\leq  C\,\frac{(\log N)^3 }{{N}}$.

This concludes the proof of Proposition \ref{prop:key}.

\section{Reconstructing the transport map via the flow}
\label{sect:flow}
In this section we study the properties of the flow generated by the vector field $\YY_t^N$ defined in \eqref{eq:YN}. As we shall see, we will need to assume that 
$W,V' \in C^r$ with $r \geq 15$.

We consider the flow of  $\YY^N_t$ given by
$$
X_t^N:\mathbb R^N\to \mathbb R^N,\qquad \dot X_t^N=\YY_t^N(X_t^N).
$$
Recalling the form of $\YY_t^N$ (see \eqref{eq:YN}) it is natural to expect that we can give an expansion for $X_t^N$.
More precisely, let us define the flow of $\yy_{0,t}$,
\begin{equation}
\label{eq:X0}
X_{0,t}:\mathbb R\to \mathbb R,\qquad \dot X_{0,t}=\yy_{0,t}(X_{0,t}),\quad X_{0,0}(\lambda)=\lambda,
\end{equation}
and let  $X_{1,t}^N=(X_{1,t}^{N,1},\ldots,X_{1,t}^{N,N}):\mathbb R^N\to \mathbb R^N$ be the solution of the linear ODE 
\begin{equation}
\label{eq:X1}
\begin{split}
\dot X_{1,t}^{N,k}(\lambda_1,\ldots,\lambda_N)&= \yy_{0,t}'(X_{0,t}(\lambda_k))\cdot X_{1,t}^{N,k}(\lambda_1,\ldots,\lambda_N)
+ \yy_{1,t}(X_{0,t}(\lambda_k))\\
&+\int \zz_t(X_{0,t}(\lambda_k), y)\,dM_N^{X_{0,t}}(y)\\
&+\frac1N\sum_{j=1}^N \partial_{2}\zz_{t}\Bigl(X_{0,t}(\lambda_k),X_{0,t}(\lambda_j)\Bigr)\cdot X_{1,t}^{N,j}(\lambda_1,\ldots,\lambda_N)
\end{split}
\end{equation}
with the initial condition $X_{1,t}^N=0$, and $M_N^{X_{0,t}}$ is defined as 
$$
\int f(y) dM_N^{X_{0,t}}(y)=\sum_{i=1}^N \biggl[f(X_{0,t}(\lambda_i))-\int f d\mu_{V_t}\biggr]\qquad \forall\,f \in C_c(\mathbb R).
$$
If we set
$$
X_{0,t}^N(\lambda_1,\ldots,\lambda_N):=\bigl(X_{0,t}(\lambda_1),\ldots,X_{0,t}(\lambda_N)\bigr),
$$
then the following result holds.
\begin{lem} \label{the flow}
Assume that $W,V' \in C^r$ with $r \geq 15$.
Then the flow $X^N_t=(X_{t}^{N,1},\ldots,X_{t}^{N,N}):\mathbb R^N\to \mathbb R^N$ is of class $C^{r-8}$ and the following properties hold:
Let $X_{0,t}$ and $X_{1,t}^N$ be as in \eqref{eq:X0} and \eqref{eq:X1} above,
and define $X^N_{2,t}:\R^N\to\R^N$ via the identity 
$$X_t^N=X_{0,t}^{N}+ \frac1N X_{1,t}^N+\frac{1}{N^2} X_{2,t}^N\,.$$
Then 
\begin{equation}\label{boi1}
\sup_{1\leq k \leq N}\|X_{1,t}^{N,k}\|_{L^4(\mathbb P_{V}^N)} \leq C\log N,\qquad \|X_{2,t}^{N}\|_{L^2(\mathbb P_{V}^N)} \leq CN^{1/2}(\log N)^2,
\end{equation}
where 
$$
\|X_{i,t}^{N}\|_{L^2(\mathbb P_{V}^N)}=\biggl(\int |X_{i,t}^N|^2d\mathbb P_{V}^N\biggr)^{1/2}
,\qquad
|X_{i,t}^N|:=\sqrt{\sum_{j=1,\ldots,N}|X_{i,t}^{N,j}|^2},\quad  i =0,1,2.
$$
In addition, there exists a constant $C>0$ such that, with probability greater than $1- N^{-N/C}$,
\begin{equation}\label{boi2}\max_{1\le k,k'\le N}
|X_{1,t}^{N,k}(\lambda_1,\ldots,\lambda_N)-X_{1,t}^{N,k'}(\lambda_1,\ldots,\lambda_N)|\le C\,\log N \sqrt{N}|\lambda_k-\lambda_{k'}|.\end{equation}

\end{lem}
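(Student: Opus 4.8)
The plan is to build a hierarchy of ODEs for the successive terms of the ansatz $X_t^N=X_{0,t}^N+\frac1N X_{1,t}^N+\frac1{N^2}X_{2,t}^N$, and then to control $X_{1,t}^N$ and $X_{2,t}^N$ by Grönwall arguments fed by the moment bounds \eqref{conc3b}--\eqref{conc3bb} and the concentration inequality \eqref{conc} (applied under $\mathbb P_V^N=\mathbb P^N_{V_0}$). First I would record that, by Lemma \ref{lem:regular}, $\yy_{0,t},\yy_{1,t},\zz_t$ and a fixed number of their derivatives decay like $1/V'$ at infinity, hence are bounded uniformly in $N$; since the prefactor $1/N$ in \eqref{eq:YN} compensates the total mass $\le 2N$ of $M_N$, the vector field $\YY_t^N$ is bounded and of class $C^{r-8}$ with $N$-uniform bounds (the bottleneck being $\yy_{1,t}\in C^{r-8}$), so by Cauchy--Lipschitz the flow $X_t^N$ exists globally and is a $C^{r-8}$ diffeomorphism. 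Plugging the ansatz into $\dot X_t^N=\YY_t^N(X_t^N)$ and Taylor expanding $\yy_{0,t},\yy_{1,t},\zz_t$ around the points $X_{0,t}^{N,k}$, the order-$1$ terms reproduce $\dot X^N_{0,t}=\yy_{0,t}(X^N_{0,t})$, the order-$1/N$ terms reproduce exactly \eqref{eq:X1}, and the remainder gives a linear ODE
\[
\dot X_{2,t}^{N,k}=\yy_{0,t}'(X_{0,t}^{N,k})\,X_{2,t}^{N,k}+\frac1N\sum_j\partial_2\zz_t(X_{0,t}^{N,k},X_{0,t}^{N,j})\,X_{2,t}^{N,j}+H_t^{N,k},\qquad X_{2,0}^N=0,
\]
where $H^N_t$ collects the higher-order terms (quadratic in $X_{1,t}^N$, products of $X_{1,t}^N$ with linear statistics of $M_N^{X_{0,t}^N}$, third-order Taylor remainders, and $\tfrac1N(\text{bounded})\cdot X_{1,t}^N\,X_{2,t}^N$ terms). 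Here I would use that $(X_{0,t})_\sharp\mu_V=\mu_{V_t}$ — a consequence of the construction of $\yy_{0,t}$, see \eqref{defpsi} and \eqref{bn} — so that $M_N^{X_{0,t}^N}(h)=\sum_i h(X_{0,t}(\lambda_i))-N\int h\,d\mu_{V_t}$ is a linear statistic $\sum_i (h\circ X_{0,t})(\lambda_i)-N\int (h\circ X_{0,t})\,d\mu_V$ of exactly the type controlled under $\mathbb P^N_{V_0}$ in Section \ref{sect:laplace}.

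For the first bound in \eqref{boi1} I would note that \eqref{eq:X1} is a linear ODE whose diagonal coefficient $\yy_{0,t}'$ is bounded and whose coupling matrix $(\tfrac1N\partial_2\zz_t(X_{0,t}(\lambda_k),X_{0,t}(\lambda_j)))_{k,j}$ has $\ell^\infty\to\ell^\infty$ norm $\le\|\partial_2\zz_t\|_\infty$, with forcing $\yy_{1,t}(X_{0,t}(\lambda_k))+M_N^{X_{0,t}^N}\big(\zz_t(X_{0,t}(\lambda_k),X_{0,t}(\cdot))\big)$. Expanding this linear statistic in Fourier in the running variable and invoking the decay of $\zz_t$ and of its derivatives (Lemma \ref{central}) together with \eqref{conc3b}--\eqref{conc3bb}, one gets $\sup_k\|M_N^{X_{0,t}^N}(\zz_t(X_{0,t}(\lambda_k),\cdot))\|_{L^4(\mathbb P_V^N)}\le C\log N$ (this uniformity in $k$ is what needs $r\ge 15$). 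Writing \eqref{eq:X1} in integrated form, applying Minkowski's inequality, and setting $R_t:=\sup_k\|X_{1,t}^{N,k}\|_{L^4(\mathbb P_V^N)}$ (a priori finite, $O(N)$), the coupling term contributes $\le\|\partial_2\zz_s\|_\infty R_s$, so $R_t\le C\log N+C\int_0^t R_s\,ds$ and Grönwall gives $R_t\le C\log N$. For the second bound, the linear part of the ODE above has coefficient matrix with $\ell^2(\R^N)$-operator norm bounded uniformly in $N$ (estimate the coupling block by its Hilbert--Schmidt norm $\le\|\partial_2\zz_t\|_\infty$), hence its propagator is bounded on $\ell^2$ and it suffices to estimate $\|H_t^N\|_{L^2(\mathbb P_V^N)}$ (the $\R^N$-valued norm). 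Using $\sup_k\|X_{1,t}^{N,k}\|_{L^4}\le C\log N$, Cauchy--Schwarz, and again \eqref{conc3b}--\eqref{conc3bb}, each term of $H_t^N$ is $\le CN^{1/2}(\log N)^2$ in this norm (for instance $\big(\sum_k\|X_{1,t}^{N,k}\|_{L^4}^4\big)^{1/2}\le CN^{1/2}(\log N)^2$, and $\|X_{1,t}^{N,k}\,M_N^{X_{0,t}^N}(\partial_1\zz_t(X_{0,t}(\lambda_k),\cdot))\|_{L^2}\le\|X_{1,t}^{N,k}\|_{L^4}\|M_N^{X_{0,t}^N}(\partial_1\zz_t(X_{0,t}(\lambda_k),\cdot))\|_{L^4}\le C(\log N)^2$ for each $k$); the $\tfrac1N(\text{bounded})\cdot X_{1,t}^N X_{2,t}^N$ pieces are absorbed on the event of the next step, where $X_{1,t}^N$ is deterministically $O(\sqrt{N\log N})$, the complementary event having probability $\le N^{-N/C}$ and negligible contribution. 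Grönwall then yields $\|X_{2,t}^N\|_{L^2(\mathbb P_V^N)}\le CN^{1/2}(\log N)^2$.

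For \eqref{boi2} I would work on the event $\mathcal E:=\{\sup_{\|f\|_{\rm Lip}\le1}|M_N^{X_{0,t}^N}(f)|\le C\sqrt{N\log N}\text{ for all }t\in[0,1]\}$, which by \eqref{conc} (at $t=0$, using $M_N^{X_{0,t}^N}(f)=M_N^{X_{0,0}^N}(f\circ X_{0,t})$ and that $X_{0,t}$ is Lipschitz) has probability $\ge1-e^{-cN\log N}\ge1-N^{-N/C}$. On $\mathcal E$ the forcing of \eqref{eq:X1} is $\le C\sqrt{N\log N}$ pointwise — using that $\zz_t(X_{0,t}(\lambda_k),X_{0,t}(\cdot))$ has Lipschitz norm bounded uniformly in $k$ — so Grönwall applied to $\sup_k|X_{1,t}^{N,k}|$ (the coupling being $\ell^\infty$-bounded) gives $\sup_k|X_{1,t}^{N,k}|\le C\sqrt{N\log N}$ deterministically on $\mathcal E$. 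Then, for $D_t^{k,k'}:=X_{1,t}^{N,k}-X_{1,t}^{N,k'}$, subtracting \eqref{eq:X1} for $k$ and $k'$ and writing $\yy_{0,t}'(X_{0,t}(\lambda_k))X_{1,t}^{N,k}-\yy_{0,t}'(X_{0,t}(\lambda_{k'}))X_{1,t}^{N,k'}=\yy_{0,t}'(X_{0,t}(\lambda_k))D_t^{k,k'}+\big(\yy_{0,t}'(X_{0,t}(\lambda_k))-\yy_{0,t}'(X_{0,t}(\lambda_{k'}))\big)X_{1,t}^{N,k'}$, one gets $\dot D_t^{k,k'}=\yy_{0,t}'(X_{0,t}(\lambda_k))D_t^{k,k'}+G_t^{k,k'}$ with $D_0^{k,k'}=0$. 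Using that $\yy_{0,t}',\yy_{1,t},\partial_1\zz_t,\partial_1\partial_2\zz_t$ are bounded and Lipschitz, that $X_{0,t}$ is Lipschitz, that $\zz_t(X_{0,t}(\lambda_k),\cdot)-\zz_t(X_{0,t}(\lambda_{k'}),\cdot)$ has Lipschitz norm $\le C|\lambda_k-\lambda_{k'}|$, and the pointwise bound on $\sup_k|X_{1,t}^{N,k}|$ on $\mathcal E$, one checks $|G_t^{k,k'}|\le C\sqrt{N\log N}\,|\lambda_k-\lambda_{k'}|$ on $\mathcal E$; integrating the linear ODE ($|\yy_{0,t}'|\le C$) gives $|D_t^{k,k'}|\le C\sqrt{N\log N}\,|\lambda_k-\lambda_{k'}|\le C\log N\sqrt N\,|\lambda_k-\lambda_{k'}|$, which is \eqref{boi2}.

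The main obstacle will be the bookkeeping of the Taylor expansion of the nonlinear and nonlocal vector field $\YY_t^N$ in powers of $1/N$ — sorting out which contributions to $H_t^N$ are linear in $X_{2,t}^N$ with bounded coefficient and which are genuine forcing terms of size $O(N^{1/2}(\log N)^2)$ in the $\ell^2(\R^N)$-valued norm — together with verifying that the loop-equation moment bounds of Section \ref{sect:laplace} apply \emph{uniformly in the base point} $\lambda_k$ to the linear statistics $M_N^{X_{0,t}^N}(h_k)$, the uniformity resting on the uniform decay estimates of Lemma \ref{central}. Once this is arranged, all the estimates close by Grönwall together with \eqref{conc} and \eqref{conc3b}--\eqref{conc3bb}.
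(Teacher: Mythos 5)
Your proposal is correct and follows essentially the same route as the paper: the same ODE hierarchy for $X_{0,t},X_{1,t}^N,X_{2,t}^N$, the same key $L^4(\mathbb P_V^N)$ bound $C\log N$ on the linear statistics $\int \zz_t(X_{0,t}(\lambda_k),y)\,dM_N^{X_{0,t}}(y)$ and its $\partial_1$ analogue via Fourier decomposition and \eqref{conc3b}--\eqref{conc3bb}, Gr\"onwall for $\sup_k\|X_{1,t}^{N,k}\|_{L^4}$ and for the $\ell^2$ norm of $X_{2,t}^N$, and the high-probability Lipschitz estimate \eqref{boi2} via \eqref{conc}. The only cosmetic deviations are that you run the $X_{2}$ estimate through a bounded $\ell^2$ propagator (Duhamel) instead of differentiating $\|X_{2,t}^N\|_{L^2(\mathbb P_V^N)}^2$, and absorb the $\tfrac1N X_1X_2$ cross terms on the high-probability event rather than with the trivial deterministic bound $|X_{1,t}^{N,k}|\le CN$ as the paper does; both choices are sound.
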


\begin{proof}
Since $\YY_t^N \in C^{r-8}$ (see Lemma \ref{lem:regular})
it follows by Cauchy-Lipschitz theory that $X_t^N$ is of class $C^{r-8}$.

Using the notation $\hat\lambda=(\lambda_1,\ldots,\lambda_N)\in \mathbb R^N$
and
$$
X_t^{N,k,\sigma}(\hat\lambda):=
X_{0,t}(\lambda_k)+
\sigma\frac{X_{1,t}^{N,k}}{N}(\hat\lambda)+\sigma\frac{X_{2,t}^{N,k}}{N^2}(\hat\lambda)
=(1-\sigma)X_{0,t}(\lambda_k)+\sigma X_t^{N,k}(\hat\lambda)
$$
and defining the measure $M_N^{X_{t}^{N,s}}$ as 
\begin{equation}
\label{eq:MN1}
\int f(y)\, dM_N^{X_{t}^{N,s}}(y)=\sum_{i=1}^N \biggl[f\bigl((1-s)X_{0,t}(\lambda_i)+sX_t^{N,i}(\hat\lambda)\bigr)-\int f \,d\mu_{V_t}\biggr]\qquad \forall\,f \in C_c(\mathbb R).
\end{equation}
by a Taylor expansion we get an ODE for $X_{2,t}^N$:
\begin{equation}
\label{eq:ODE X2 main}
\begin{split}
\dot X_{2,t}^{N,k}(\hat \lambda)&= 
\int_0^1 \yy_{0,t}'\Bigl(X_t^{N,k,s}(\hat\lambda) \Bigr)\,ds\cdot
X_{2,t}^{N,k}(\hat\lambda)\\
&+N\int_0^1\Bigl[\yy_{0,t}'\Bigl(X_t^{N,k,s}(\hat\lambda)\Bigr)
-\yy_{0,t}'\Bigl(X_{0,t}(\lambda_k) \Bigr)\Bigr]\,ds
\cdot X_{1,t}^{N,k}(\hat\lambda)\\
&+ \int_0^1 \yy_{1,t}'\Bigl(X_t^{N,k,s}(\hat\lambda) \Bigr)
\,ds\cdot \Bigl(X_{1,t}^{N,k}(\hat\lambda)+\frac{X_{2,t}^{N,k}(\hat\lambda)}{N} \Bigr)\\
&+\int_0^1 \bigg[
\int \partial_{1}\zz_t\Bigl(X_t^{N,k,s}(\hat\lambda),y \Bigr)\,dM_N^{X_{t}^{N,s}}(y)\\
&\qquad \qquad \qquad \qquad  -
\int \partial_{1}\zz_t\Bigl(X_{0,t}(\lambda_k),y \Bigr)\,dM_N^{X_{0,t}}(y)\bigg]\,ds\cdot \Bigl(X_{1,t}^{N,k}(\hat\lambda)+\frac{X_{2,t}^{N,k}(\hat\lambda)}{N} \Bigr)\\
&+ \int \partial_{1}\zz_{t}\Bigl(X_{0,t}(\lambda_k),y \Bigr)\,dM_N^{X_{0,t}}(y)\cdot \Bigl(X_{1,t}^{N,k}(\hat\lambda)+\frac{X_{2,t}^{N,k}(\hat\lambda)}{N} \Bigr)\\
&+\sum_{j=1}^N\int_0^1 \biggl[\partial_{2}\zz_{t}\Bigl(X_t^{N,k,s}(\hat\lambda),X_t^{N,j,s}(\hat\lambda)\Bigr)-\partial_{2}\zz_{t}\Bigl(X_{0,t}(\lambda_k),X_{0,t}(\lambda_j)\Bigr)\biggr]\,ds\cdot X_{1,t}^{N,j}(\hat\lambda)\\
&+\sum_{j=1}^N\int_0^1 \biggl[\partial_{2}\zz_{t}\Bigl(X_t^{N,k,s}(\hat\lambda),X_t^{N,j,s}(\hat\lambda)\Bigr)\biggr]\,ds\cdot \frac{X_{2,t}^{N,j}(\hat\lambda)}{N},
\end{split}
\end{equation}
with the initial condition $X_{2,0}^{N,k}=0$.
Using that 
$$
\|\yy_{0,t}\|_{C^{r-2}(\R)}\leq C
$$
(see Lemma \ref{lem:regular})
we obtain \begin{equation}
\label{eq:bound X0}
\|X_{0,t}\|_{C^{r-2}(\R)}\leq C.
\end{equation}
We now start to control $X_{1,t}^N$.
First, simply by using that $M_N$ has mass bounded by $2N$ we obtain the rough bound $|X_{1,t}^{N,k}|\leq C\,N$.
Inserting this bound into \eqref{eq:ODE X2 main} one easily obtain the bound
$|X_{2,t}^{N,k}|\leq C\,N^2$.

We now prove finer estimates.
First, by \eqref{conc} together with the fact that $X_{0,t}$ and $x\mapsto \zz_{t}(y, x)$ are Lipschitz (uniformly in $y$), 
it follows that there exists a finite constant $C$ such that, with  probability greater than $1- N^{-N/C}$,
\begin{equation}
\label{eq:psi2MN}
\biggl\| \int \zz_t(\cdot, \lambda)\,dM_N^{X_{0,t}}(\lambda)\biggr\|_\infty\leq C\,
\log N \sqrt{N}.
\end{equation}
Hence it follows easily from \eqref{eq:X1} that
\begin{equation}
\label{eq:bounds X1}
\max_k\|X_{1,t}^{N,k}\|_{\infty}\leq C\log N \sqrt{N}
\end{equation}
outside a set of probability bounded by $N^{-N/C}$.

In order to control $X_{2,t}^N$ we first estimate $X_{1,t}^N$ in $L^4(\mathbb P_{V}^N)$:
using \eqref{eq:X1} again, we get
\begin{multline}
\label{eq:ODE X1}
\frac{d}{dt}\Bigl(\max_{k}\|X_{1,t}^{N,k}\|_{L^4(\mathbb P_{V}^N)}\Bigr)\\
 \leq C\biggl(
\max_{k}\|X_{1,t}^{N,k}\|_{L^4(\mathbb P_{V}^N)}+1
+  \biggl\|\int\zz_t(X_{0,t}(\lambda_k), y)\,dM_N^{X_{0,t}}(y)\biggr\|_{L^4(\mathbb P_{V}^N)}\biggr).
\end{multline}
To bound $X_{1,t}^N$ in $L^4(\mathbb P_{V}^N)$ and then to be able to estimate $X_{2,t}^N$ in 
 $L^2(\mathbb P_{V}^N)$, we will use the following estimates:
\begin{lem} For any $k=1,\ldots,N$,
\begin{equation}
\label{eq:bound psi2 p1}
\biggl\|\int\zz_t(X_{0,t}(\lambda_k), y)\,dM_N^{X_{0,t}}(y)\biggr\|_{L^4(\mathbb P_{V}^N)}
 \leq C \log N,
\end{equation}
\begin{equation}
\label{eq:bound psi2}
\biggl\|\int \partial_{1}\zz_{t}\Bigl(X_{0,t}(\lambda_k),y \Bigr)\,dM_N^{X_{0,t}}(y)\biggr\|_{L^4(\mathbb P_{V}^N)} \leq C\log N.
\end{equation}
\end{lem}\begin{proof}
We write the Fourier decomposition of  $\eta_{t}(x,y):=\zz_t(X_{0,t}(x),X_{0,t}(y))$ to get 
$$\int  \eta_{t}(x,y)\, d M_N(y)=\int \hat  \eta_{t}(x,\xi)  \int e^{i\xi y} \,dM_N(y) \,d\xi\,.$$
Since $\zz_t \in C^{u,v}$ for $u+v\le r-5$
 and $X_{0,t}\in C^{r-2}$ (see \eqref{eq:bound X0}), we deduce that 
$$ |  \hat\eta_{t}(x,\xi) |\le \frac{C}{1+ |\xi|^{r-5}},$$
so that using \eqref{conc3bb} we get 
\begin{align*} 
  \biggl\|\sup_x\biggl|\int \eta_{t}(x,y) \,dM_N(y)\biggr|\biggr\|_{L^4(\mathbb P_{V}^N)}
&\le \int  \Bigl\| \hat\eta_{t}(\cdot,\xi)\Bigr\|_{\infty}  \biggl\| \int e^{i\xi y} dM_N(y)\biggr\|_{L^4(\mathbb P_{V}^N)}\,d\xi \\
&\le C\log N \int \Bigl\|\hat\eta_{t}(\cdot,\xi)\Bigr\|_{\infty} \left(1+|\xi|^7\right)\,d\xi\\
& \le C\log N,
\end{align*}
provided $r>13$.
 The same arguments work
for  $\partial_{1} \zz_{t}$ provided $r>14.$ 
Since by assumption $r \geq 15$, this concludes the proof.
\end{proof}
 Inserting \eqref{eq:bound psi2 p1} into \eqref{eq:ODE X1} we get 
\begin{equation}
\label{eq:X1eps} 
\|X_{1,t}^{N,k}\|_{L^4(\mathbb P_{V}^N)} \leq C\log N \qquad \forall\,k=1,\ldots,N,
 \end{equation}
 which proves the first part of \eqref{boi1}.

We now bound the time derivative of the $L^2$ norm of $X_{2,t}^N$:
using that $M_N$ has mass bounded by $2N$,
in \eqref{eq:ODE X2 main} we can easily estimate
$$
\biggl| N\int_0^1\Bigl[\yy_{0,t}'\Bigl(X_t^{N,k,s}(\hat\lambda)\Bigr)
-\yy_{0,t}'\Bigl(X_{0,t}(\lambda_k) \Bigr)\Bigr]\,ds
\cdot X_{1,t}^{N,k}(\hat\lambda)\biggr|
\leq C|X_{1,t}^{N,k}|^2 + \frac{C}N|X_{1,t}^{N,k}|\,|X_{2,t}^{N,k}|,
$$
\begin{multline*}
\int_0^1 \bigg|
\int \partial_{1}\zz_{t}\Bigl(X_t^{N,k,s}(\hat\lambda),y \Bigr)\,dM_N^{X_{t}^{N,s}}(y)-
\int \partial_{1}\zz_{t}\Bigl(X_{0,t}(\lambda_k),y \Bigr)\,dM_N^{X_{0,t}}(y)\bigg|\,ds\\
\leq C|X_{1,t}^{N,k}| + \frac{C}N|X_{2,t}^{N,k}|+\frac{C}N \sum_j \biggl(|X_{1,t}^{N,j}| + \frac{1}N|X_{2,t}^{N,j}|\biggr),
\end{multline*}
\begin{multline*}
\sum_{j=1}^N\int_0^1 \bigg|\partial_{2}\zz_{t}\Bigl(X_t^{N,k,s}(\hat\lambda),X_t^{N,j,s}(\hat\lambda)\Bigr)
-\partial_{2}\zz_{t}\Bigl(X_{0,t}(\lambda_k),X_{0,t}(\lambda_j)\Bigr)\bigg|\,ds\,|X_{1,t}^{N,j}|\\
\leq \frac{C}N \sum_j \biggl(|X_{1,t}^{N,j}|^2 + \frac{1}N|X_{2,t}^{N,j}|\,|X_{1,t}^{N,j}|\biggr),
\end{multline*}
hence
\begin{align*}
\frac{d}{dt}\|X_{2,t}^N\|_{L^2(\mathbb P_{V}^N)}^2&=2\int \sum_k X_{2,t}^{N,k}\cdot \dot X_{2,t}^{N,k}\,d\mathbb P_{V}^N\\
& \leq C \int \sum_k |X_{2,t}^{N,k}|^2\,d\mathbb P_{V}^N + 
C \int \sum_{k}|X_{1,t}^{N,k}|^2|X_{2,t}^{N,k}|d\mathbb P_{V}^N\\
&+\frac{C}{N} \int \sum_{k}|X_{1,t}^{N,k}| |X_{2,t}^{N,k}|^2d\mathbb P_{V}^N
+C \int \sum_{k}|X_{1,t}^{N,k}| |X_{2,t}^{N,k}|\,d\mathbb P_{V}^N\\
&+\frac{C}{N^2}\int\sum_{k}|X_{2,t}^{N,k}|^3\,d\mathbb P_{V}^N + \frac{C}N\int\sum_{k,j} |X_{1,t}^{N,j}|\,|X_{1,t}^{N,k}|\,|X_{2,t}^{N,k}|\,d\mathbb P_{V}^N\\
&+\frac{C}{N^3}\int\sum_{k,j}|X_{2,t}^{N,k}|^2\,|X_{2,t}^{N,j}|\,d\mathbb P_{V}^N\\
&+\sum_{k}\int X_{2,t}^{N,k}\cdot
\int_0^1 \biggl[\int \partial_{1}\zz_{t}\Bigl(X_{0,t}(\lambda_k),y \Bigr)\,dM_N^{X_{0,t}}(y)\biggr]\,ds\cdot X_{1,t}^{N,k}\,d\mathbb P_{V}^N\\
&+ \frac{C}N\int\sum_{k,j} |X_{1,t}^{N,j}|^2\,|X_{2,t}^{N,k}|\,d\mathbb P_{V}^N+\frac{C}{N^2}\int\sum_{k,j}|X_{2,t}^{N,k}|\,|X_{2,t}^{N,j}|\,|X_{1,t}^{N,j}|\,d\mathbb P_{V}^N\\
&+\frac{C}N \int\sum_{k,j}|X_{2,t}^{N,k}|\,|X_{2,t}^{N,j}|\,d\mathbb P_{V}^N.
\end{align*}
Using the trivial bounds 
$|X_{1,t}^{N,k}|\leq C\,N$ and
$|X_{2,t}^{N,k}|\leq C\,N^2$, \eqref{eq:bound psi2}, 
and elementary inequalities such as, for instance,
$$
\sum_{k,j}|X_{1,t}^{N,j}|\,|X_{1,t}^{N,k}|\,|X_{2,t}^{N,k}|
\leq  \sum_{k,j}\Bigl(|X_{1,t}^{N,j}|^4+ |X_{1,t}^{N,k}|^4 + |X_{2,t}^{N,k}|^2\Bigr),
$$
we obtain
\begin{equation}
\label{eq:ODEX2}
\begin{split}
\frac{d}{dt}\|X_{2,t}^N\|_{L^2(\mathbb P_{V}^N)}^2
& \leq C\bigg( \|X_{2,t}^N\|_{L^2(\mathbb P_{V}^N)}^2 
 + \int \sum_{k}|X_{1,t}^{N,k}|^4\,d\mathbb P_{V}^N\\
&+ \int \sum_{k}|X_{1,t}^{N,k}|^2\,d\mathbb P_{V}^N
+\sum_{k}\log N\,  \|X_{2,t}^{N,k}\|_{L^2(\mathbb P_{V}^N)} \|X_{1,t}^{N,k}\|_{L^4(\mathbb P_{V}^N)}\biggr).
\end{split}
\end{equation}
We now observe, by \eqref{eq:X1eps}, that the last term is bounded by
$$
\|X_{2,t}^N\|_{L^2(\mathbb P_{V}^N)}^2 + (\log N)^2\sum_{k}\|X_{1,t}^{N,k}\|_{L^4(\mathbb P_{V}^N)}^2
\leq \|X_{2,t}^N\|_{L^2(\mathbb P_{V}^N)}^2 + C\,N(\log N)^4.
$$
Hence, using that  $\|X_{1,t}^{N,k}\|_{L^2(\mathbb P_{V}^N)}
\leq \|X_{1,t}^{N,k}\|_{L^4(\mathbb P_{V}^N)}$ and \eqref{eq:X1eps} again,
 the right hand side of \eqref{eq:ODEX2} can be bounded by $C\|X_{2,t}^N\|_{L^2(\mathbb P_{V}^N)} ^2+
C\,N(\log N)^4$, and a Gronwall argument gives
$$
\|X_{2,t}^N\|_{L^2(\mathbb P_{V}^N)}^2 \leq C\, N(\log N)^4,
$$
thus
$$
\|X_{2,t}^N\|_{L^2(\mathbb P_{V}^N)} \leq C\,N^{1/2}(\log N)^2,
$$
concluding the proof of \eqref{boi1}.

We now prove \eqref{boi2}: using \eqref{eq:X1} we have
\begin{align*}
&|\dot X_{1,t}^{N,k}(\hat\lambda)-\dot X_{1,t}^{N,k'}(\hat\lambda)|\\
&\leq
|\yy_{0,t}'(X_{0,t}(\lambda_k))-\yy_{0,t}'(X_{0,t}(\lambda_{k'})) |\,|X_{1,t}^{N,k}(\hat\lambda)|\\
&+|\yy_{0,t}'(X_{0,t}(\lambda_{k'})) |\,|X_{1,t}^{N,k}(\hat\lambda)-X_{1,t}^{N,k'}(\hat\lambda)|
+ |\yy_{1,t}(X_{0,t}(\lambda_k))-\yy_{1,t}(X_{0,t}(\lambda_{k'}))|\\
&+\biggl|\int \Bigl(\zz_t(X_{0,t}(\lambda_k), y)-
\zz_t(X_{0,t}(\lambda_{k'}), y)\Bigr)\,dM_N^{X_{0,t}}(y)\biggr|\\
&+\frac1N\sum_{j=1}^N\int_0^1 \Bigl|\partial_{2}\zz_{t}\Bigl(X_{0,t}(\lambda_k),X_{0,t}(\lambda_j)\Bigr) -\partial_{2}\zz_{t}\Bigl(X_{0,t}(\lambda_{k'}),X_{0,t}(\lambda_j)\Bigr) \Bigr|\,ds\,|X_{1,t}^{N,j}(\hat\lambda)|.
\end{align*}
Using that $|X_{0,t}(\lambda_k)-X_{0,t}(\lambda_{k'})| \leq C|\lambda_k - \lambda_{k'}|$,
the bound \eqref{eq:bounds X1},
the Lipschitz regularity of $\yy_{0,t}'$, $\yy_{1,t}$, $\zz_t$,
and $\partial_{2}\zz_{t}$, and
the fact that
$$
\biggl\| \int \partial_{1}\zz_{t}(\cdot, \lambda)\,dM_N^{X_{0,t}}(\lambda)\biggr\|_\infty\leq C\,
\log N \sqrt{N}
$$
with probability greater than $1- N^{-N/C}$ (see \eqref{conc}),
we get
$$
|\dot X_{1,t}^{N,k}(\hat\lambda)-\dot X_{1,t}^{N,k'}(\hat\lambda)|
\leq C|X_{1,t}^{N,k}(\hat\lambda)- X_{1,t}^{N,k'}(\hat\lambda)|+C\log N \sqrt{N}|\lambda_k - \lambda_{k'}|
$$
outside a set of probability less than $ N^{-N/C}$,
so \eqref{boi2} follows from Gronwall.

\end{proof}

\section{Transport and universality}\label{sect:univ}

In this section we prove 
Theorem \ref{thm:univ} on
universality using the regularity properties of the approximate transport
maps obtained in the previous sections.

\begin{proof}[Proof of Theorem \ref{thm:univ}]
Let us first remark that the map $T_0$ from Theorem \ref{thm:T} 
coincides with $X_{0,1}$, where $X_{0,t}$ is the flow defined in \eqref{eq:X0}.
Also, notice that $X^N_1:\R^N\to\R^N$
is an approximate transport of $\mathbb P^N_V$ onto $\mathbb P_{V+W}^N$
(see Lemma \ref{lemma:flow} and Proposition \ref{prop:key}).
Set $\hat X_1^N:=X_{0,1}^N+\frac{1}{N}X_{1,1}^N$, with $X_{0,t}^N$ and $X_{1,t}^N$ as in Lemma \ref{the flow}.
Since $X_1^N-\hat X_1^N=\frac{1}{N^2}X_{2,1}^N$,
recalling \eqref{boi1} and using H\"older inequality to control the $L^1$ norm with the $L^2$ norm, we see that
\begin{equation}
\label{eq:Xhat}
\begin{split}
\biggl|\int g(\hat X_1^N)\,d\mathbb P_V^N - \int g(X_1^N)\,d\mathbb P_V^N  \biggr|&
\leq \|\nabla g\|_\infty \frac{1}{N^2} \int |X_{2,1}^N|\,d\mathbb P_V^N \\
&\leq \|\nabla g\|_\infty \frac{1}{N^2} \|X_{2,1}^N\|_{L^2(\mathbb P_V)} \\
&\leq C \|\nabla g\|_\infty \frac{(\log N)^2}{N^{3/2}}.
\end{split}
\end{equation}
This implies that also $\hat X^N_1:\R^N\to\R^N$
is an approximate transport of $\mathbb P^N_V$ onto $\mathbb P_{V+W}^N$.
In addition, we see that $\hat X_1^N$ preserves the order of the $\lambda_i$ with large probability.
Indeed, first of all $X_{0,t}:\R\to \R$ is the flow of $\yy_{0,t}$ which is Lipschitz with some constant $L$, hence
differentiating \eqref{eq:X0} we get
$$
\frac{d}{dt}\bigl|X_{0,t}'\bigr| \leq \bigl|\yy_{0,t}'(X_{0,t})\bigr|\,\bigl|X_{0,t}'\bigr| \leq L\,\bigl|X_{0,t}'\bigr|,
\qquad X_{0,0}'=1,
$$
so Gronwall's inequality gives the bound
$$
e^{-Lt}\leq \bigl|X_{0,t}'\bigr|\leq e^{Lt}
$$
Since $X_{0,0}'=1$, it follows by continuity that
$X_{0,t}'$ must remain positive for all time and it satisfies
\begin{equation}
\label{eq:x0prime}
e^{-Lt}\leq X_{0,t}'\leq e^{Lt},
\end{equation}
from which we deduce that
$$
e^{-Lt}\bigl(\lambda_j - \lambda _i\bigr) \leq X_{0,t}(\lambda_j) - X_{0,t}(\lambda _i)
\leq e^{Lt}\bigl(\lambda_j - \lambda _i\bigr), \qquad \forall\,\lambda_i<\lambda_j.
$$
In particular,
$$
e^{-L}\bigl(\lambda_j - \lambda _i\bigr) \leq X_{0,1}(\lambda_j) - X_{0,1}(\lambda _i)
\leq e^{L}\bigl(\lambda_j - \lambda _i\bigr).
$$
Hence, using the notation $\hat\lambda=(\lambda_1,\ldots,\lambda_N)$,
since
$$
\biggl|\frac{1}{N}X_{1,t}^{N,j}(\hat\lambda)-\frac{1}NX_{1,t}^{N,i}(\hat\lambda)\biggr|\le C\,\frac{\log N}{ \sqrt{N}}|\lambda_i-\lambda_{j}|
$$
(see \eqref{boi2}) with probability greater than $1-N^{-N/C}$
we get
$$
\frac{1}{C}\bigl(\lambda_j - \lambda _i\bigr) \leq \hat X_{1}^{N,j}(\hat \lambda) - \hat X_{1}^{N,i}(\hat\lambda )
\leq C\bigl(\lambda_j - \lambda _i\bigr)
$$
with probability greater than $1-N^{-N/C}$.

We now make the following observation: the ordered measures $\tilde P_V^N$
and $\tilde P_{V+W}^N$ are obtained as the image of $\mathbb P_V^N$ and $\mathbb P_{V+W}^N$
via the map
$R:\R^N \to \R^N$ defined as
$$
[R(x_1,\ldots,x_N)]_i:=\min_{\sharp J=i}\max_{j\in J} x_j.
$$
Notice that this map is $1$-Lipschitz for the sup norm.

Hence, if $g$ is a function of $m$-variables
we have
$
\|\nabla(g\circ R)\|_\infty \leq \sqrt{m} \|\nabla g\|_\infty$, so by Lemma \ref{lemma:flow}, Proposition \ref{prop:key}, and \eqref{eq:Xhat}, we get
$$
\biggl|\int g\circ R (\hat X_1^N) \,d\mathbb P_V^N-\int g\circ R\, d\mathbb P_{V+W}^N\biggr|
\leq C\frac{(\log N)^3 }{N}\|g\|_\infty+C\sqrt{m}\,\frac{(\log N)^2}{N^{3/2}}  \|\nabla g\|_\infty.
$$
Since $\hat X_1^N$ preserves the order with probability greater than $1-N^{-N/C}$,
we can replace $g\circ R (N \hat X_1^N)$ with $g(N \hat X_1^N\circ R)$ up to a very small error bounded by $\|g\|_\infty N^{-N/C}$. Hence, since $R_\# \mathbb P_V^N=\tilde P_V^N$
and $R_\# \mathbb P_{V+W}^N=\tilde P_{V+W}^N$, we deduce that,
for any Lipschitz function $f:\R^m\to \R$, 
\begin{multline*}
\bigg|\int f\bigl(N(\lambda_{i+1}-\lambda_i),\ldots,N(\lambda_{i+m}-\lambda_i)\bigr)\, d\tilde P_{V+W}^N\qquad\\
\quad -\int f\Bigl(N\bigl(\hat X_1^{N,i+1}(\hat\lambda)-\hat X_1^{N,i}(\hat\lambda)\bigr),
\ldots,N\bigl(\hat X_1^{N,i+m}(\hat\lambda)-\hat X_1^{N,i}(\hat\lambda)\bigr)\Bigr)
d\tilde P_{V}^N\bigg| \\
\le C\, \frac{(\log N)^3 }{N}\|f\|_\infty+C\,\sqrt{m}\,\frac{(\log N)^2}{N^{1/2}}\|\nabla f\|_\infty.
\end{multline*}
Recalling that 
$$
\hat X_1^{N,j}(\hat\lambda)=X_{0,1}(\lambda_j)+\frac{1}{N}X_{1,1}^{N,j}(\hat\lambda),
$$
we observe that, 
as $X_{0,1}$ is of class $C^2$,
$$X_{0,1}(\lambda_{i+k})-X_{0,1}(\lambda_{i})=X_{0,1}'(\lambda_i)\,(\lambda_{i+k}-\lambda_{i})+O(|\lambda_{i+k}-\lambda_{i}|^2).$$
Also, by \eqref{boi2} we deduce that,
out of a set of probability bounded by $ N^{-N/C}$,
\begin{equation}\label{toto}
|X_{1,1}^{N,i+k}(\hat\lambda)-X_{1,1}^{N,i}(\hat\lambda)|\le  C\,\log N \sqrt{N}|\lambda_{i+k}-\lambda_i|\,.
\end{equation}
As $X_{0,1}'(\lambda_i) \geq e^{-L}$ (see \eqref{eq:x0prime}) we deduce
$$\frac{1}{N}\,|X_{1,1}^{N,i+k}(\hat\lambda)-X_{1,1}^{N,i}(\hat\lambda)|\le C\,|X_{0,1}'(\lambda_i)\,(\lambda_{i+k}-\lambda_{i})| \,\frac{\log N}{{N}^{1/2}}$$
and
$$
O(|\lambda_{i+k}-\lambda_{i}|^2)=O\bigl(\bigl|X_{0,1}'(\lambda_i)\,(\lambda_{i+k}-\lambda_{i})\bigr|^2\bigr)
$$
hence with probability greater than $1- N^{-N/C}$ it holds
\begin{align*}
\hat X_1^{N,i+k}(\hat\lambda)-\hat X_1^{N,i}(\hat\lambda)=
X_{0,t}'(\lambda_i)\,(\lambda_{i+k}-\lambda_{i}) \biggl[1+
O\biggl(\frac{\,\log N}{N^{1/2}} \biggr)+O\bigl(\bigl|X_{0,1}'(\lambda_i)\,(\lambda_{i+k}-\lambda_{i})\bigr|\bigr)\biggr]\,.
\end{align*}
Since we assume $f$  supported in $[-M,M]^m$, the domain of integration is restricted to $\hat\lambda$
such that $\{NX_{0,t}'(\lambda_i)\,(\lambda_i-\lambda_{i+k})\}_{1\le k \le m}$  is bounded by $2M$ for $N$ large enough, therefore
$$\hat X_1^{N,i+k}(\hat\lambda)-\hat X_1^{N,i}(\hat\lambda)=
X_{0,t}'(\lambda_i)\,(\lambda_{i+k}-\lambda_{i}) +O\biggl(2M\frac{\,\log N}{N^{3/2}}\biggr)
+O\biggl(\frac{4M^2}{N^2}\biggr),$$
from which the first bound follows easily.

For the second point
we observe that $a_{V+W}=X_{0,1}(a_V)$ and, arguing as before,
\begin{multline*}
\bigg|
\int f\bigl(N^{2/3}(\lambda_1-a_{V+W}), \ldots,N^{2/3}(\lambda_m-a_{V+W})\bigr) \,d\tilde P_{V+W}^N\\
-\int f\Bigl(N^{2/3}\bigl(\hat X_1^{N,1}(\hat\lambda)-X_{0,1}(a_V)\bigr), \ldots,N^{2/3}\bigl(\hat X_1^{N,m}(\hat\lambda)-X_{0,1}(a_V)\bigr)\Bigr)\, d\tilde P_{V}^N\bigg|\\
\le C\, \frac{(\log N)^3 }{N}\,\|f\|_\infty+C\,\sqrt{m}\frac{(\log N)^2}{N^{5/6}}\,\|\nabla f\|_\infty.
\end{multline*}
Since, by \eqref{boi1},
\begin{align*}
\hat X_1^{N,i}(\lambda)&=X_{0,1}(\lambda_i) +O_{L^4(\mathbb P_{V}^N)}\biggl(\frac{\log N}{N}\biggr)\\
&= X_{0,1}(a_V) +X_{0,1}'(a_V)\,(\lambda_i-a_V)+O\bigl(|\lambda_i-a_V|^2\bigr)+O_{L^4(\mathbb P_{V}^N)}\biggl(\frac{\log N}{N}\biggr),
\end{align*}
we conclude as in the first point.
\end{proof}

\bibliographystyle{alpha}
\bibliography{trander}

\end{document}